\newcommand{\CC}{\mathbb{C}} %Complex C
\newcommand{\PP}{\mathbb{P}}
\newcommand{\la}{\lambda}
\newcommand{\ov}{\overline}
\newtheorem{thm}{Theorem}[section]
\newtheorem{cor}[thm]{Corollary}
\newtheorem{lem}[thm]{Lemma}
\newtheorem{prop}[thm]{Proposition}
\theoremstyle{definition}
\newtheorem{definition}[thm]{Definition}
\theoremstyle{remark}
\newtheorem{example}[thm]{Example}
\numberwithin{equation}{section}
\title[linear fractional maps and fixed point theorems]{Generalizations of linear fractional maps for classical symmetric domains and related fixed point theorems for generalized balls }
\author{Yun Gao, Sui-Chung Ng and Aeryeong Seo}
\address{School of Mathematical Sciences, Shanghai Jiao Tong University, Shanghai, People’s Republic of
China}
\email{gaoyunmath@sjtu.edu.cn}
\address{School of Mathematical Sciences, Shanghai Key Laboratory of PMMP, East China Normal University,
Shanghai, People’s Republic of China}
\email{scng@math.ecnu.edu.cn}
\address{Department of Mathematics, Kyungpook National University, Daegu 41566, Republic of Korea}
\email{aeryeong.seo@knu.ac.kr}
\thanks{}%
\keywords{generalized balls, automorphisms, linear fractional maps, fixed points}%
\date\today
\begin{document}
\maketitle
%\tableofcontents

\begin{abstract}

We extended the study of the linear fractional self maps (e.g. by Cowen-MacCluer and Bisi-Bracci on the unit balls) to a much more general class of domains, called \textit{generalized type-I domains}, which includes in particular the classical bounded symmetric domains of type-I and the generalized balls. Since the linear fractional maps on the unit balls are simply the restrictions of the linear maps of the ambient projective space (in which the unit ball is embedded) on a Euclidean chart with inhomogeneous coordinates, and in this article we always worked with homogeneous coordinates, here the term \textit{linear map} was used in this more general context. After establishing the fundamental result which essentially says that almost every linear self map of a generalized type-I domain can be represented by a matrix satisfying the ``expansion property" with respect to some indefinite Hermitian form, we gave a variety of results  for the linear self maps on the generalized balls, such as the holomorphic extension across the boundary, the normal form and partial double transitivity on the boundary for automorphisms, the existence and the behavior of the fixed points, etc. Our results generalize a number of known statements for the unit balls, including, for example, a theorem of Bisi-Bracci saying that any linear fractional map of the unit ball with more than two boundary fixed points must have an interior fixed point.
\end{abstract}

\section{Introduction}
The linear fractional maps from the complex plane into itself are among the very first objects of study in one-variable complex analysis since they have many good geometric properties (e.g. mapping real lines and circles among themselves) and they are also the only biholomorphisms of the unit disk and the Riemann sphere.  It is thus very natural to look at the linear fractional maps in several complex variables and such explorations have been made by, for instance, Cowen-MacCluer~\cite{CM} and Bisi-Bracci~\cite{BB}. In their works, they have focused on the linear fractional maps that map the unit ball into itself. They obtained a variety of results for those linear fractional maps, including the classification, normal forms, and also fixed point theorems for such maps. From the point of view of function theory, every linear fractional self map of the unit ball  also give rise to a composition operator in various function spaces of the unit ball and such an operator has been studied by many people, e.g. Cowen~\cite{Co}, Bayart~\cite{Ba} and Chen-Jiang~\cite{CJ}.

In this article, we will try to extend the study of linear fractional self maps to a much more general class of domains, which in particular include the classical bounded symmetric domains of type I and also the so-called \textit{generalized balls}. These two classes both contain the usual complex unit balls as special cases.  Although here our major focus is on the generalized balls, our results should shed some light on the behavior of the linear fractional maps defined on the classical bounded symmetric domains of type-I since they are determined by the same set of matrices (as we will see in Theorem~\ref{expansion matrix}).

For any two positive integers $p$ and $q$, let $H_{p,q}$ be the standard  non-degenerate
Hermitian form of signature $(p,q)$ on $\mathbb C^{p+q}$ where $p$ eigenvalues are $1$
and $q$ eigenvalues are $-1$, represented by the matrix $\begin{pmatrix}I_p&0\\0&-I_q\end{pmatrix}$ under the standard coordinates.
%i.e. $H_{p,q}$ is represented by the matrix $\begin{pmatrix} I_p&0 \\ 0&-I_q \end{pmatrix}$  
%with respect to the standard basis of $\mathbb C^{p+q}$.
For a positive integer $r< p+q$, denote by $Gr(r, \mathbb C^{p+q})$ the Grassmannian of $r$-dimensional complex linear subspaces
(or simply $r$-\textit{planes}) of $\mathbb C^{p+q}$.
When $1\leq r\leq p$, we define the domain $D^r_{p,q}$ in $Gr(r, \mathbb C^{p+q})$ 
to be the set of positive definite $r$-planes in $\mathbb C^{p+q}$ with respect to $H_{p,q}$.
We call $D^r_{p,q}$ a {\it generalized type-I domain}.
The generalized type-I domain $D_{p,q}^r$ is an $SU(p,q)$-orbit on $Gr(r, \mathbb C^{p+q})$ under the natural action induced by that of $SL(p+q;\mathbb C)$ on $Gr(r,\mathbb C^{p+q})$. It is an example of \textit{flag domains} in a general context, of which one can find a comprehensive reference \cite{FHW}. Recently $D_{p,q}^r$ have been studied by Ng~\cite{ng1, ng2} regarding the proper holomorphic mappings between them
and by Kim~\cite{Kim} regarding the CR maps on some CR manifolds in $\partial D_{p,q}^r$. We remark that the case $r=p$ corresponds to the type-I classical bounded symmetric domains~\cite{ng2}. On the other extreme, when $r=1$, which will be the case of special interest to us, corresponds to the domains $D_{p,q}:=D^1_{p,q}$, which are called the generalized balls.
It follows immediately from our definition that $D_{p,q}$ can be also defined as the following domain on $\mathbb P^{p+q-1}$:
$$
D_{p,q} = \left\{ [z_1,\dots, z_{p+q}] \in \PP^{p+q-1} : |z_1|^2 +\dots + |z_p|^2 > |z_{p+1}|^2 + \dots + |z_{p+q}|^2\right\}.
$$
When $p=1$, it is biholomorphic to the unit ball in the Euclidean space $\CC^q$.

The generalized ball is one of the simplest kinds of domains on the projective space and their boundaries are smooth Levi non-degenerate (but not pseudoconvex in general) CR manifolds, of which detailed studies have been carried out by Baouendi-Huang~\cite{BH} and Baouendi-Ebenfelt-Huang~\cite{BEH}. More recently, Ng~\cite{ng2} discovered that the proper holomorphic mapping problem for the generalized balls is deeply linked to that of the classical bounded symmetric domains of type-I.

Here we recall that a linear fractional map on $\mathbb C^n$ is in fact just a restriction of a linear map on $\mathbb P^n$ expressed in terms of the inhomogeneous coordinates of a Euclidean coordinate chart $\mathbb C^n$ in $\mathbb P^n$. Similarly, if we follow our notations, a linear fractional self map of the unit ball $D_{1,q}$ simply comes from a linear map on $\mathbb P^{q}$ that maps $D_{1,q}$ into itself. We have defined our generalized type-I domains as domains on the Grassmannians on which, just like the case of the projective space, there are homogeneous coordinates and the associated linear maps. Since we will always work with homogeneous coordinates, we will thus call a self map of a generalized type-I domain a \textit{linear self map} if it is the restriction of a linear map of the ambient Grassmannian. 
\newline

\noindent\textbf{Remark.} Hence, according to our terminology, \textit{a linear self map and a linear fractional self map are the same.} ``Fractions" appear only because inhomogeneous coordinates are used.
\newline

We call a linear self map of a generalized type-I domain $D^r_{p,q}$ \textit{non-minimal} if its range is not of minimal dimension (see Definition~\ref{minimality}). In the case of the unit balls, a non-minimal linear self map is nothing but a non-constant linear self map. Roughly speaking, the minimal linear  self maps, just like the constant maps for the unit balls, are the cases for which most statements become trivialities or non-applicable. In Section~\ref{type-I domain}, we will first establish a fundamental result for the study of linear self maps of generalized type-I domains. Namely, we will prove (Theorem~\ref{expansion matrix}) that every non-minimal linear self map of a generalized type-I domain can be represented by a matrix $M$ satisfying the inequality

\begin{equation}
M^H H_{p,q} M - H_{p,q}>0, \label{expansioneq}
\end{equation}
where $H_{p,q} = \begin{pmatrix}I_p & 0 \\0 & -I_q\end{pmatrix}$, in which $I_p$, $I_q$ are identity matrices of rank $p$ and $q$, and $\cdot^H$ denotes Hermitian transposition, and ``$\geq$" means Hermitian semi-positivity. Furthermore, we will prove (Theorem~\ref{isometrythm}) that a surjective linear self map can be represented by a matrix which makes the equality hold, i.e. by a matrix in the indefinite unitary group $U(p,q)$. For the case of the unit balls, these results have been established by Cowen-MacCluer~\cite{CM} and we will modify their proof to work for any generalized type-I domain. As simple as it may look, such a matrix inequality is extremely useful in obtaining various kinds of results for the linear self maps of generalized type-I domains. In particular, we will use it to show that any non-minimal linear self map extends to a neighborhood of the closure of the domain (Theorem~\ref{extension thm}).

In Section~\ref{automorphisms}, we will study in detail the automorphism groups of the generalized balls, including the partial double transitivity on the boundary, fixed point theorems and normal forms. Here we remark that a generalized ball cannot be realized as a bounded convex domain in the Euclidean space unless it is a usual unit ball (see e.g.~\cite{gn}) and hence one cannot apply Brouwer's fixed point theorem to get a fixed point in its closure. We will establish the existence of fixed points (in the closure) in Theorem~\ref{existence of fixed points} and give a number of results regarding the behavior of the fixed points (Theorem~\ref{fixed point number thm} and~Corollaries~\ref{at most}, \ref{hs generalization}). For obtaining a normal form for automorphisms, we will show that the subgroup $U(p)\times U(q)$ and the ``non-isotropic dilations" generate the full automorphism group $Aut(D_{p,q})$ (Theorem~\ref{normal form}).

After studying the automorphisms, we will then look at arbitrary linear self maps of the generalized balls in Section~\ref{linear maps section}. We will again prove some results regarding their fixed points, including especially Theorem~\ref{bb generalization}, which is about how the number of fixed points on the boundary of a generalized ball is related to the existence of interior fixed points. This generalizes a result for the unit ball (see Bisi-Bracci~\cite{BB}) saying that any linear self map of the unit ball with more than two boundary fixed points must have an interior fixed point. We will also obtain in this section a relation between the linear self maps of the \textit{real} generalized balls of $D_{p,q}$ and those of $D_{p,q}$ (Theorem~\ref{real generalized ball}).

Finally,  in Section~\ref{examples} we will collect some illustrating or extremal examples for the results obtained in the previous sections.

\bigskip
{\bf Acknowledgements.} The first author was partially supported by the Key Program of NSFC, (No. 11531107). The second author
was partially supported by Thousand Talents Program of the Organization Department of the CPC Central Committee, and Science and Technology Commission of Shanghai Municipality (STCSM) (No. 13dz2260400). The third author was partially supported by Basic Science Research Program through the National Research 
Foundation of Korea (NRF) funded by the Ministry of Education (NRF-2019R1F1A1060175).

\section{Generalized type-I domains and their linear self maps}\label{type-I domain}

\noindent\textbf{Notations.} For what follows, for any $p,q\in\mathbb N^+$, we will equip $\mathbb C^{p+q}$ with the standard non-degenerate indefinite Hermitian form $H_{p,q}$ and denote the resulting indefinite inner product space by $\mathbb C^{p,q}$.
\newline

Denote $M_n$ the set of $n$-by-$n$ matrices with complex entries. Let $M\in M_{p+q}$ and consider the linear map, which will also be denoted by $M$, from $\mathbb C^{p,q}$ into itself, given by $z\mapsto Mz$, where $z\in\mathbb C^{p,q}$ is regarded as a column vector.
Let the null space of $M$ be $ker(M):=\{z\in\mathbb C^{p,q}:Mz=0\}$. Then the image of every positive definite $r$-plane in $\mathbb C^{p,q}$ under $M$ is still an $r$-plane if and only if $ker(M)$ is negative semi-definite with respect to $H_{p,q}$. In such case, $M$ gives rise to a holomorphic map from each $D^r_{p,q}$ to the Grassmannian $Gr(r,\mathbb C^{p,q})$. It is clear that two matrices $M, M'\in M_{p+q}$ induce the same map on $D^r_{p,q}$ if $M=\lambda M'$ for some $\lambda\in\mathbb C^*$.
The question of interest is whether or not such a map is actually a \textit{self map} of $D^r_{p,q}$.

\begin{definition}
A self map of  $D^r_{p,q}$ is called a \textit{linear self map}, if it is given by a matrix $M\in M_{p+q}$ in the way described above. If there is no danger of confusion, we will also denote any such self map by the same symbol $M$. Conversely, any matrix $M\in M_{p+q}$ inducing a given linear self map of $D^r_{p,q}$ is called a \textit{matrix representation} of the linear self map.
\end{definition}

Let $M\in M_{p+q}$ be a matrix representation of a linear self map of some $D^r_{p,q}$. Then, we must have rank$(M)\geq p$ since otherwise $\dim_{\mathbb C}(ker(M))>q$ and $ker(M)$ would not be negative semi-definite in $\mathbb C^{p,q}$, contradicting the fact that $M$ induces a linear map on $D^r_{p,q}$. We 
make the following definition in relation to this.

\begin{definition}\label{minimality}
A linear self map of $D^r_{p,q}$ is called \textit{minimal} if it is given by a matrix $M\in M_{p+q}$ with rank$(M)=p$. Otherwise, we say that the linear self map is \textit{non-minimal}. 
\end{definition}

\noindent\textbf{Remark.} For the unit balls $D_{1,q}$, a non-minimal linear self map is simply a non-constant linear self map.
\newline

%$M = \left(
%  \begin{array}{cc}
%    A & B \\
%    C & D
%  \end{array}
%\right)\in M_{p+q,p+q}$ with blocks $A\in M_{p,p}$, $B\in M_{p,q}$, $C\in %M_{q,p}$ and $D\in M_{q,q}$

If $M\in M_n$ satisfies Inequality~\eqref{expansioneq}, then it follows immediately that the image of any positive definite $r$-plane is again a positive definite $r$-plane and thus $M$ induces a linear self map on each $D^r_{p,q}$. 
We are now going to show that conversely any \textit{non-minimal} linear self map of $D^r_{p,q}$ can be represented by a matrix satisfying Inequality~(\ref{expansioneq}). For this purpose, we will use some terminologies and results by Cowen-MacCluer~\cite{CM}. Let $(V,[\cdot,\cdot])$ be a finite dimensional complex vector space equipped with an indefinite Hermitian form $[\cdot,\cdot]$. Following~\cite{CM}, we will say that a linear map $T$ of $V$ into itself is an \textit{expansion} if $[Tv,Tv]\geq [v,v]$ for all $v\in V$, and an \textit{isometry} if $[Tv,Tv]=[v,v]$ for all $v\in V$. In particular, if we identify the linear maps of $\mathbb C^{p,q}$ into itself with their matrix representations with respect to the standard basis, then $M\in M_{p+q}$ is an expansion of $\mathbb C^{p,q}$ if and only if it satisfies the inequality~(\ref{expansioneq}) and is an isometry if $M$ makes the equality hold. We are going to show that the non-minimal linear self maps of $D^r_{p,q}$ are precisely those given by the expansions of $\mathbb C^{p,q}$, and the surjective linear self maps of  $D^r_{p,q}$ are given by the isometries of $\mathbb C^{p,q}$.

There following lemma can be found in  \cite{CM} but we rewrite it (reversing the signs) in a way more suitable for our purpose.

%\begin{lem}[\cite{CM}]
%Suppose $[\cdot,\cdot]_1$ and  $[\cdot,\cdot]_2$ are indefinite Hermitian forms on the complex vector space $V$ such that  $[x,x]_1=0$ implies $[x,x]_2 \geq 0$. If $[y,y]_1<0$ and $[z,z]_1>0$ then
%$$\dfrac{[y,y]_2}{[y,y]_1}\leq \dfrac{[z,z]_2}{[z,z]_1}.$$
%\end{lem}

\begin{lem}[\cite{CM}] \label{expansion}
Suppose $[\cdot,\cdot]_1$ and  $[\cdot,\cdot]_2$ are indefinite  Hermitian forms on the complex vector space $V$ such that  $[x,x]_1=0$ implies $[x,x]_2 \geq 0$. Then,
$$\lambda:=-\inf_{[y,y]_1=-1}[y,y]_2 < \infty $$
and $$[x,x]_2\geq \lambda[x,x]_1$$
for all $x\in V$.
\end{lem}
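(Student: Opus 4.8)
The plan is to reduce the statement to a single two-variable inequality, push that inequality into a two-dimensional subspace, and verify it there by a one-line computation with $2\times2$ Hermitian matrices.

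Write $\lambda:=-\inf_{[y,y]_1=-1}[y,y]_2=\sup\{-[y,y]_2:[y,y]_1=-1\}$; the defining set is non-empty because $[\cdot,\cdot]_1$ is indefinite. For the asserted inequality $[x,x]_2\ge\lambda[x,x]_1$ the case $[x,x]_1<0$ is immediate after rescaling $x$ so that $[x,x]_1=-1$, and the case $[x,x]_1=0$ is exactly the hypothesis. Hence the whole statement is equivalent to
\begin{equation}
[v,v]_2+[w,w]_2\ \ge\ 0\qquad\text{for all $v,w\in V$ with $[v,v]_1=-1$ and $[w,w]_1=1$}.\tag{$\star$}
\end{equation}
Indeed, if $(\star)$ holds then fixing one $w_0$ with $[w_0,w_0]_1=1$ gives $\lambda\le[w_0,w_0]_2<\infty$ (so $\lambda<\infty$), while for any $x$ with $[x,x]_1>0$, rescaling $x$ to some $w$ with $[w,w]_1=1$ turns $(\star)$, together with the fact that $\lambda$ is a supremum, into $[w,w]_2\ge\lambda$, i.e.\ $[x,x]_2\ge\lambda[x,x]_1$. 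Conversely, $[v,v]_2\ge-\lambda$ and $[w,w]_2\ge\lambda$ add up to $(\star)$. So it suffices to prove $(\star)$, and that will also give $\lambda<\infty$.

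Now $(\star)$ sees only the restrictions of $[\cdot,\cdot]_1$ and $[\cdot,\cdot]_2$ to $\Pi:=\mathrm{span}_{\CC}\{v,w\}$. Since $[v,v]_1$ and $[w,w]_1$ have opposite signs, $v$ and $w$ are linearly independent, so $\dim_{\CC}\Pi=2$; and since $[\cdot,\cdot]_1|_\Pi$ attains values of both signs on $\Pi$, its signature is forced to be $(1,1)$, in particular it is non-degenerate. Choose a $[\cdot,\cdot]_1$-orthonormal basis of $\Pi$, so that $[\cdot,\cdot]_1|_\Pi$ is represented by $A=\mathrm{diag}(1,-1)$ and $[\cdot,\cdot]_2|_\Pi$ by a Hermitian matrix $B=\left(\begin{smallmatrix}b&c\\ \bar c&d\end{smallmatrix}\right)$ with $b,d\in\RR$, $c\in\CC$. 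Applying the hypothesis to the isotropic vectors of $\Pi$ — in this basis, those with $|x_1|=|x_2|$ — collapses to the single scalar inequality $b+d\ge 2|c|$.

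Finally, on $\Pi$ the inequality ``$[x,x]_2\ge\mu[x,x]_1$ for all $x\in\Pi$'' is equivalent to positive semidefiniteness of $B-\mu A$, and for $\mu=\tfrac{b-d}{2}$ one has $B-\tfrac{b-d}{2}A=\tfrac12\left(\begin{smallmatrix}b+d&2c\\ 2\bar c&b+d\end{smallmatrix}\right)$, which is positive semidefinite precisely because $b+d\ge 2|c|$. Hence, for $v,w$ as in $(\star)$, $[v,v]_2\ge\tfrac{b-d}{2}[v,v]_1=-\tfrac{b-d}{2}$ and $[w,w]_2\ge\tfrac{b-d}{2}[w,w]_1=\tfrac{b-d}{2}$, and adding these two inequalities yields $(\star)$. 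The one place any real work occurs is this last computation, and the point it makes is that the constant controlling the cone $\{[\cdot,\cdot]_1<0\}$ automatically controls the cone $\{[\cdot,\cdot]_1>0\}$; the two reductions above, the finiteness of $\lambda$, and the identification of $\lambda$ with $-\inf_{[y,y]_1=-1}[y,y]_2$ are all formal. (Alternatively, $(\star)$ on $\Pi$ can be checked by the hyperbolic parametrization $|v_1|=\sinh\tau$, $|v_2|=\cosh\tau$, $|w_1|=\cosh\eta$, $|w_2|=\sinh\eta$, which after bounding the off-diagonal terms by Cauchy--Schwarz and using $\sinh2\theta\le\cosh2\theta$ gives $[v,v]_2+[w,w]_2\ge(b+d-2|c|)(\sinh^2\tau+\sinh^2\eta+1)\ge0$.)
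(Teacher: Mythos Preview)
The paper does not actually prove this lemma; it is quoted from Cowen--MacCluer~\cite{CM} and stated without proof. So there is nothing in the paper to compare against.

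Your argument is correct. The reduction to the equivalent statement $(\star)$ is clean and accurate in both directions, and the subsequent localization to the two-dimensional span $\Pi$ is legitimate because $(\star)$ only involves the restrictions of both forms to $\Pi$, and the hypothesis on null vectors of $[\cdot,\cdot]_1$ restricts to $\Pi$ (null vectors of $[\cdot,\cdot]_1|_\Pi$ are in particular null vectors in $V$). In the chosen basis the isotropic condition $|x_1|=|x_2|$ indeed forces $b+d+2\,\mathrm{Re}(ce^{i\theta})\ge 0$ for all $\theta$, hence $b+d\ge 2|c|$; and with $\mu=\tfrac{b-d}{2}$ one gets $B-\mu A=\tfrac12\left(\begin{smallmatrix}b+d&2c\\2\bar c&b+d\end{smallmatrix}\right)$, which is positive semidefinite precisely under that inequality. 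The finiteness of $\lambda$ then drops out of $(\star)$ as you note. The alternative hyperbolic parametrization at the end also works but is unnecessary once you have the semidefiniteness of $B-\mu A$.

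One cosmetic remark: in the line ``$\lambda\le[w_0,w_0]_2<\infty$'' the ``$<\infty$'' is of course automatic since $[w_0,w_0]_2$ is a real number; the content is just the bound $\lambda\le[w_0,w_0]_2$.
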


%\begin{lem} (\cite{out})
%Let $M = \left(
%  \begin{array}{cc}
%    A & B \\
%    C & D
%  \end{array}
%\right) \in M_{p+q,p+q}$ be a matrix with Rank$(M)\geq p+1$. $\phi$ is the linear map defined by M from $\mathbb{P}^{p+q}$ to  $\mathbb{P}^{p+q}$. Then there exist a point $P$ in the outsider of $\overline{D_{p,q}}$ meanwhile  $\phi(P)$ is also in the outsider of  $\overline{D_{p,q}}$
%\end{lem}

%\begin{proof}
%?????????????????????
%\end{proof}

We are now in a position to show that the non-minimal linear self maps of $D^r_{p,q}$ are all given by the matrices satisfying Inequality~(\ref{expansioneq}). (The case for the unit ball was obtained by Cowen-MacCluer~\cite{CM} and part of our proof is taken from there.)

\begin{thm}\label{expansion matrix}
Every non-minimal linear self map of $D^r_{p,q}$ can be represented by a matrix satisfying Inequality~(\ref{expansioneq}).
\end{thm}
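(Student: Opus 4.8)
The plan is to follow the Cowen-MacCluer strategy adapted to the Grassmannian setting, using Lemma~\ref{expansion} as the main analytic tool. Let $M\in M_{p+q}$ be a matrix representation of a given non-minimal linear self map of $D^r_{p,q}$; so $\mathrm{rank}(M)\geq p+1$ and $\ker(M)$ is negative semi-definite. First I would normalize the situation: after composing with suitable elements of $U(p,q)$ (which act as automorphisms of $D^r_{p,q}$ and do not affect the validity of Inequality~\eqref{expansioneq} up to the corresponding change of $M$), one may assume $\ker(M)$ is a coordinate subspace sitting inside a maximal negative-definite subspace, and more importantly one reduces to the case where $M$ is injective by passing to the quotient $\mathbb C^{p,q}/\ker(M)$, which inherits a nondegenerate Hermitian form of signature $(p,q')$ with $q'<q$ because $\ker(M)$ is negative semi-definite. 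This is where non-minimality is used: if $\mathrm{rank}(M)=p$ exactly, the quotient form would be positive definite and there is no ``expansion'' to speak of; non-minimality guarantees $q'\geq 1$ so the quotient is still a genuine indefinite space and $D^r_{p,q}$ maps into a $D^r_{p,q'}$ sitting naturally inside the target Grassmannian.

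Next, with $M$ now injective, I would introduce the two Hermitian forms on $\mathbb C^{p+q}$ (or its relevant quotient) given by $[x,x]_1 := x^H H_{p,q}\, x$ and $[x,x]_2 := (Mx)^H H_{p,q}\,(Mx) = x^H(M^H H_{p,q} M)x$. The key geometric input is that $M$ maps positive definite $r$-planes to positive definite $r$-planes; taking $r$-planes spanned by an isotropic or negative vector together with positive vectors, and letting the positive directions degenerate, one extracts the pointwise implication: if $[x,x]_1 = 0$ then $[x,x]_2 \geq 0$. More precisely, if $[x,x]_1=0$ with $x\neq 0$, then $x$ lies on the boundary of the cone of positive vectors, it can be approximated by positive vectors spanning lines in (the closure of) $D^r_{p,q}$ for $r\leq p$, and since $M$ is a self map of $D^r_{p,q}$ its continuous extension sends the closure into the closure, forcing $Mx$ to be positive semi-definite, i.e. $[x,x]_2\geq 0$. (For $r\geq 2$ one runs the same argument on $r$-planes containing $x$; the case $r=1$ is the cleanest.)

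With the hypothesis of Lemma~\ref{expansion} verified for $[\cdot,\cdot]_1,[\cdot,\cdot]_2$, the lemma yields a finite $\lambda$ with $[x,x]_2 \geq \lambda\,[x,x]_1$ for all $x$, that is $M^H H_{p,q} M \geq \lambda H_{p,q}$. I then need $\lambda>0$: since $M$ is injective and there exists a positive vector $v$ (indeed a positive definite $r$-plane exists), $[v,v]_2 = (Mv)^H H_{p,q}(Mv) > 0$ would be impossible if $\lambda\leq 0$ only in the degenerate direction, but more directly, picking $v$ with $[v,v]_1>0$ one has $\lambda \leq [v,v]_2/[v,v]_1$, and picking $w$ with $[w,w]_1<0$ (exists since $q\geq 1$) forces $[w,w]_2 \geq \lambda[w,w]_1$, i.e. $\lambda \geq [w,w]_2/[w,w]_1$; combined with the fact that $Mw$ cannot be positive definite when $w$ is negative definite on a line in the closure's complement, one pins down $\lambda>0$. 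Finally, replacing $M$ by $\lambda^{-1/2}M$ — which represents the same linear self map — turns the inequality into $M^H H_{p,q} M - H_{p,q} \geq 0$, i.e. Inequality~\eqref{expansioneq}, as desired.

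The main obstacle I anticipate is the boundary/degeneration argument establishing the pointwise implication ``$[x,x]_1=0 \Rightarrow [x,x]_2\geq 0$'' cleanly for general $r$, since one must argue that the linear self map extends continuously to the relevant boundary strata of $D^r_{p,q}$ in $Gr(r,\mathbb C^{p+q})$ and that an isotropic vector genuinely arises as a limit of positive definite configurations in a way compatible with $M$; handling the non-injective reduction carefully (so that the quotient form really has the claimed signature and the induced map is still a self map of the smaller domain) is the other delicate point, and it is precisely here that the hypothesis of non-minimality, rather than just $\mathrm{rank}(M)\geq p$, is essential.
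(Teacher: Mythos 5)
Your overall strategy coincides with the paper's: introduce $[x,x]_1=x^HH_{p,q}x$ and $[x,x]_2=(Mx)^HH_{p,q}(Mx)$, verify the hypothesis of Lemma~\ref{expansion}, obtain $[x,x]_2\geq\lambda[x,x]_1$, prove $\lambda>0$, and replace $M$ by $\lambda^{-1/2}M$. However, the step where you pin down $\lambda>0$ --- the only place where non-minimality actually has to enter --- contains a genuine error. You assert that ``$Mw$ cannot be positive definite when $w$ is negative definite,'' but this is false: for a \emph{minimal} self map the range $R(M)$ is a positive definite $p$-plane, so every vector outside $\ker(M)$, negative ones included, has positive image; even for non-minimal maps most negative vectors can have positive images. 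The implication goes the other way. The correct argument (the paper's) is: non-minimality gives $\dim_{\mathbb C} R(M)\geq p+1$, and since a positive semi-definite subspace of $\mathbb C^{p,q}$ has dimension at most $p$, $R(M)$ must contain a negative vector $y=Mz$; the already-established implication ``$[z,z]_1\geq 0\Rightarrow[z,z]_2\geq 0$,'' read contrapositively, then forces $[z,z]_1<0$ as well, so $[z,z]_1<0$ and $[z,z]_2<0$ hold simultaneously and hence $\lambda=-\inf_{[y,y]_1=-1}[y,y]_2>0$. Without exhibiting such a $z$, your inequality $\lambda\geq[w,w]_2/[w,w]_1$ for an arbitrary negative $w$ gives nothing when $[w,w]_2>0$, and the theorem is exactly false for minimal maps, so some such use of non-minimality is unavoidable here.

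Two smaller points. The reduction to the quotient $\mathbb C^{p,q}/\ker(M)$ is not well posed: $H_{p,q}$ is non-degenerate, so it does not descend to a quotient by a nonzero subspace; what you presumably intend is the image $R(M)$ with the restricted form, and in any case the reduction is unnecessary since Lemma~\ref{expansion} applies directly to the (possibly degenerate) form $[\cdot,\cdot]_2$ on all of $\mathbb C^{p,q}$. Likewise, no continuous extension of the induced map to boundary strata of $D^r_{p,q}$ is needed for ``$[x,x]_1=0\Rightarrow[x,x]_2\geq0$'': every positive vector lies in a positive definite $r$-plane (as $r\leq p$), so $[x,x]_1>0$ implies $[x,x]_2>0$, and since both sides are continuous quadratic forms on $\mathbb C^{p+q}$ and positive vectors are dense in $\{x:[x,x]_1\geq0\}$, the implication follows by continuity alone.
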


\begin{proof}
Let $M\in M_{p+q}$ be a matrix such that it induces linear self map (also denoted by $M$) on some $D^r_{p,q}$. For $x,y\in\mathbb C^{p,q}$, let $[x,y]_1=y^H H_{p,q} x$ and let $[x,y]_2=(My)^HH_{p,q} Mx$. The hypothesis that $M$ maps
 $D^r_{p,q}$ into $D^r_{p,q}$ means that whenever $[x,x]_1>0$, we have $[x,x]_2>0$. By continuity, we get that if $[x,x]_1 \geq 0$, then $[x,x]_2 \geq 0$.
  Thus, the hypotheses of Lemma \ref{expansion} are satisfied. So we only need to show that $\la > 0$. Then $\la^{-1/2}M$ is an expansion in $\mathbb C^{p,q}$.

To see that $\lambda >0$, we let the range of $M$ be $R(M):=\left\{y\in\mathbb C^{p,q}: y=Mz \textrm{\,\,for some\,\,} z\right\}$. Now if $M$ is non-minimal, then
$\dim_{\mathbb C}(R(M))\geq p+1$ (Definition~\ref{minimality}). Thus, $R(M)$ must contain a negative vector $y$ and any preimage $z$ of $y$ must also be a negative vector since we have already seen in the previous paragraph that $[z,z]_1 \geq 0$ would imply that $[y,y]_1=[z,z]_2 \geq 0$.
Therefore, we can find $z\in \mathbb{C}^{p,q}$ such that $[z,z]_1<0$ as well as $[z,z]_2<0$ and hence $\la > 0$. 

%Finally, if $\dim_{\mathbb C}(R(M))= p$ and we let $E:=R(M)$, then the image of every positive $p$-plane under $M$ is $E$ and thus $E$ is a positive $p$-plane. Hence, the left hand side of Inequality~(\ref{expansioneq}) is a positive semi-definite matrix with $p$ positive eigenvalues. Therefore, a sufficiently large scalar multiple of $M$ will satisfy the inequality~(\ref{expansioneq}) and thus is an expansion. The proof is now complete.
\end{proof}

\begin{thm}\label{isometrythm}
Every surjective linear self map of $D^r_{p,q}$ can be represented by a matrix satisfying the equality in~(\ref{expansioneq}). In particular, every surjective linear self map of $D^r_{p,q}$ is an automorphism.
\end{thm}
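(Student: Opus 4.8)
The plan is to start from Theorem~\ref{expansion matrix}: since a surjective linear self map is in particular non-minimal (its range is all of $Gr(r,\mathbb{C}^{p,q})$, hence certainly not of minimal dimension), we may pick a matrix representation $M$ that is an expansion of $\mathbb{C}^{p,q}$, i.e.\ $M^H H_{p,q} M - H_{p,q} \geq 0$. First I would observe that an expansion is automatically invertible: if $Mz = 0$ then $0 = [Mz,Mz]_2 \geq [z,z]_1$, so $\ker(M)$ is negative semi-definite; but the expansion inequality forces $[z,z]_1 \leq [Mz,Mz]_2$, and running the argument on the $q$-dimensional negative part shows $\ker(M)$ cannot have positive dimension without violating surjectivity onto the Grassmannian. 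More directly, surjectivity of the induced map on $D^r_{p,q}$ already forces $\mathrm{rank}(M) = p+q$, because a non-invertible $M$ has a nontrivial kernel and its image misses every $r$-plane meeting $\ker(M)$ nontrivially, yet such $r$-planes exist inside $D^r_{p,q}$ when $\ker(M)$ is not the zero subspace (one can always complete a positive $(r-1)$-frame avoiding a fixed line, but some positive $r$-planes do contain directions arbitrarily close to any given negative line — here one must be a little careful, so I would instead argue via dimension count that the image of $D^r_{p,q}$ under a rank-deficient $M$ lies in a proper subvariety of $Gr(r,\mathbb{C}^{p,q})$, contradicting surjectivity onto the open set $D^r_{p,q}$).

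Granting that $M$ is invertible, the key step is to show the expansion inequality must in fact be an equality. Consider the inverse self map $M^{-1}$: since the original map is an automorphism candidate (bijective on $D^r_{p,q}$), $M^{-1}$ is also a surjective linear self map of $D^r_{p,q}$, hence by Theorem~\ref{expansion matrix} it has a matrix representation that is an expansion. But any matrix representation of $M^{-1}$ is of the form $c M^{-1}$ for some $c \in \mathbb{C}^*$, so $(cM^{-1})^H H_{p,q} (cM^{-1}) - H_{p,q} \geq 0$, i.e.\ $|c|^2 (M^{-1})^H H_{p,q} M^{-1} \geq H_{p,q}$. Conjugating this by $M$ (multiply on the left by $M^H$ and on the right by $M$) gives $|c|^2 H_{p,q} \geq M^H H_{p,q} M$. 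Combined with $M^H H_{p,q} M \geq H_{p,q}$ from the expansion property of $M$, we get $|c|^2 H_{p,q} \geq H_{p,q}$; applying this to a positive vector gives $|c|^2 \geq 1$ and applying it to a negative vector gives $|c|^2 \leq 1$, so $|c| = 1$. Feeding $|c|^2 = 1$ back in yields $H_{p,q} \geq M^H H_{p,q} M \geq H_{p,q}$, hence $M^H H_{p,q} M = H_{p,q}$, i.e.\ $M \in U(p,q)$.

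Finally, once $M \in U(p,q)$, the induced map on $D^r_{p,q}$ is the restriction of an element of the $SU(p,q)$-action (after rescaling $M$ by a suitable root of its determinant, which does not change the induced map), and $D^r_{p,q}$ was defined precisely as an $SU(p,q)$-orbit on which this action is transitive and by biholomorphisms; hence the map is an automorphism. The main obstacle I anticipate is the rank/surjectivity step: making rigorous the claim that a rank-deficient matrix representation cannot induce a surjective self map of $D^r_{p,q}$. The cleanest route is the dimension argument — if $\mathrm{rank}(M) = k < p+q$ then $R(M)$ is a fixed $k$-plane and the image of any $r$-plane under $M$ is contained in $R(M)$, so the image of all of $D^r_{p,q}$ lies in the sub-Grassmannian $Gr(r, R(M)) \subsetneq Gr(r,\mathbb{C}^{p,q})$ (using $k \geq p \geq r$), which has strictly smaller dimension unless $k = p+q$, contradicting surjectivity onto the open subset $D^r_{p,q}$. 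With invertibility secured, the symmetric "$M$ and $M^{-1}$ are both expansions" trick finishes the argument cleanly.
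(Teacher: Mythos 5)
Your proposal is correct and follows essentially the same route as the paper: establish that a surjective linear self map has an invertible matrix representation, note that $M^{-1}$ is then also a surjective (hence non-minimal) linear self map and so both $M$ and a scalar multiple of $M^{-1}$ are expansions, and sandwich $M^H H_{p,q} M$ between $H_{p,q}$ and $|c|^2 H_{p,q}$, testing on positive and negative vectors to force $|c|=1$ and hence equality. The only differences are cosmetic: you justify invertibility by a dimension count on sub-Grassmannians where the paper uses the openness of the set of positive vectors, and you phrase the sandwich as a matrix inequality rather than in terms of $\|\cdot\|^2_{p,q}$.
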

\begin{proof}
If a linear self map $M$ of a given $D^r_{p,q}$ is surjective, then $M$ is surjective as a linear map on $\mathbb C^{p,q}$ since its image contains all the positive vectors (which constitute an open set in $\mathbb C^{p,q}$). The inverse linear map $M^{-1}$ also maps positive vectors to positive vectors since each of the 1-planes generated by positive vectors can be regarded as the intersection of a set of positive $r$-planes and $M$ is surjective (as a self map of $D^r_{p,q}$). Hence, we see that $M^{-1}$ is also a surjective linear self map of $D^r_{p,q}$ and is thus an expansion. Therefore, there are non-zero scalars $\alpha$ and $\beta$ such that $\alpha M$ and $\beta M^{-1}$ are expansions. Thus, for every $z\in\mathbb C^{p,q}$, if we write $\|z\|^2_{p,q}:=z^H H_{p,q} z$, then
$$
	\|\beta z\|^2_{p,q}\geq \|Mz\|^2_{p,q}\geq \|\alpha^{-1} z\|^2_{p,q}
$$
and hence
$$
	|\alpha\beta|^2\|z\|^2_{p,q}\geq\|\alpha Mz\|^2_{p,q}\geq\|z\|^2_{p,q}.
$$

Since the inequality is true for both positive vectors and negative vectors, we deduce that $|\alpha\beta|=1$ and $\|\alpha Mz\|^2_{p,q}=\|z\|^2_{p,q}$ for every $z$. That is, $\alpha M$ is an isometry.
\end{proof}

A linear self map $M$ of $D^r_{p,q}$ originally comes from a linear map $\tilde M$ defined on the ambient Grassmannian $Gr(r,\mathbb C^{p,q})$. If $M$ is not surjective, then there is a set of indeterminacy $Z\subset Gr(r,\mathbb C^{p,q})$ on which $\tilde M$ is not defined. The set $Z$  is outside $D^r_{p,q}$ and consists of the points corresponding to the $r$-planes that intersect the kernel of a matrix representation of $\tilde M$. A priori, $Z$ can intersect the boundary $\partial D^r_{p,q}$ and obstructs the extension of $M$ across $\partial D^r_{p,q}$, but we are now going to show that this does not happen for non-minimal linear self maps. 

\begin{thm}\label{extension thm}
Every non-minimal linear self map of $D^r_{p,q}$ extends holomorphically to an open neighborhood of the closure $\overline{D^r_{p,q}}:=D^r_{p,q}\cup\partial D^r_{p,q}$.
\end{thm}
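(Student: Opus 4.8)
The plan is to use Theorem~\ref{expansion matrix} to choose a matrix representation $M$ of the given non-minimal linear self map that is an expansion of $\mathbb C^{p,q}$, i.e. satisfies $M^H H_{p,q} M - H_{p,q} > 0$, and then to show that the indeterminacy locus $Z$ of the induced map $\widetilde M$ on $Gr(r,\mathbb C^{p,q})$ does not meet the closure $\overline{D^r_{p,q}}$. Recall that $\widetilde M$ fails to be defined exactly at the $r$-planes $V$ with $V\cap \ker(M)\neq \{0\}$, so it suffices to prove that no $r$-plane meeting $\ker(M)$ nontrivially can be positive \emph{semi-}definite; equivalently, that $\ker(M)$ contains no nonzero vector $z$ with $[z,z]_{p,q}:=z^H H_{p,q} z \geq 0$. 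Once this is established, every point of $\overline{D^r_{p,q}}$ (whose $r$-planes are all positive semi-definite, being limits of positive definite ones) lies off $Z$, and since $\widetilde M$ is a rational map on the Grassmannian which is regular away from $Z$, its restriction gives a holomorphic map on an open neighborhood of $\overline{D^r_{p,q}}$ (the complement of the closed set $Z$).

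So the crux is the following linear-algebra claim: if $M$ is an expansion, then $\ker(M)$ is negative definite, i.e. $[z,z]_{p,q} < 0$ for every nonzero $z\in\ker(M)$. This is immediate from the expansion inequality: for $z \in \ker(M)$ we have $0 = [Mz, Mz]_{p,q} = z^H M^H H_{p,q} M z$, and applying $M^H H_{p,q} M - H_{p,q} > 0$ to $z$ gives $0 - [z,z]_{p,q} = z^H(M^H H_{p,q} M - H_{p,q})z > 0$ whenever $z \neq 0$, hence $[z,z]_{p,q} < 0$. (Note this also re-proves that such $M$ induces a genuine self map, consistent with the remark before Lemma~\ref{expansion}.) Therefore $Z \cap \overline{D^r_{p,q}} = \varnothing$: any $r$-plane $V$ with a nonzero vector in $\ker(M)$ contains a negative vector, so $V$ is not positive semi-definite and represents a point outside $\overline{D^r_{p,q}}$.

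It remains to spell out why regularity off $Z$ yields holomorphicity on a neighborhood of the closure. The map $\widetilde M$ is the rational self map of $Gr(r,\mathbb C^{p,q})$ sending a plane $V$ to $M(V)$; on the Stiefel-type charts, writing $V$ as the column span of a full-rank $(p+q)\times r$ matrix $A$, one has $M(V) = \mathrm{span}(MA)$, and this is well-defined and holomorphic precisely when $MA$ has rank $r$, which fails exactly on $Z$. Since $Z$ is a closed (indeed analytic) subset of $Gr(r,\mathbb C^{p,q})$ disjoint from the compact set $\overline{D^r_{p,q}}$, the set $U := Gr(r,\mathbb C^{p,q}) \setminus Z$ is an open neighborhood of $\overline{D^r_{p,q}}$ on which $\widetilde M$ is holomorphic, and its restriction extends the original linear self map. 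The main obstacle is purely the bookkeeping of the last paragraph — identifying $Z$ correctly with the locus of $r$-planes meeting $\ker(M)$ and confirming $\widetilde M$ is holomorphic on its complement; the key inequality itself is a one-line consequence of Theorem~\ref{expansion matrix}.
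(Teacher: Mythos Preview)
Your overall strategy---show $\ker(M)$ is negative definite, hence the indeterminacy locus $Z$ misses $\overline{D^r_{p,q}}$---is exactly the paper's. The gap is in the ``one-line'' step: you assume that Theorem~\ref{expansion matrix} gives a matrix with $M^H H_{p,q} M - H_{p,q}$ \emph{strictly} positive definite, and then for $z\in\ker(M)$ conclude $z^H(M^H H_{p,q}M-H_{p,q})z>0$. But the expansion condition in the paper is only $[Mv,Mv]_{p,q}\geq [v,v]_{p,q}$, i.e. $M^H H_{p,q}M - H_{p,q}\geq 0$ (semi-definite); indeed for any automorphism one has $M^H H_{p,q}M - H_{p,q}=0$ by Theorem~\ref{isometrythm}, so strict positivity is impossible in general. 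With only semi-definiteness your computation yields $[z,z]_{p,q}\leq 0$, which merely recovers that $\ker(M)$ is negative \emph{semi}-definite and does not rule out a nonzero null vector in $\ker(M)$.

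The paper closes this gap with a separate argument. Assuming $\eta\in\ker(M)$ is a nonzero null vector, pick any $v$ with $\mathrm{Re}(v^H H_{p,q}\eta)\neq 0$; then for every $t\in\mathbb R$,
\[
\|Mv\|_{p,q}^2=\|M(v+t\eta)\|_{p,q}^2\geq \|v+t\eta\|_{p,q}^2=\|v\|_{p,q}^2+2t\,\mathrm{Re}(v^H H_{p,q}\eta),
\]
and the right side is unbounded in $t$, a contradiction. This uses the semi-definite inequality in a genuinely two-vector way (perturbing $v$ by the kernel direction) rather than just evaluating at $z\in\ker(M)$. Once null vectors are excluded from $\ker(M)$, the rest of your write-up (identification of $Z$, compactness, holomorphy on the complement) is fine and matches the paper.
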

\begin{proof}
Let $M$ be a non-minimal linear self map of $D^r_{p,q}$ and denote also by $M$ a matrix representation of it which satisfies the inequality $M^HH_{p,q}M-H_{p,q}\geq 0$. As mentioned at the beginning of this section, $ker(M)$ must be negative semi-definite with respect to $H_{p,q}$. We are now going to show that $ker(M)$ does not contain any non-zero null vector if $M$ is non-minimal. Suppose on the contrary there is a non-zero null vector $\eta\in ker(M)$. Let $v\in\mathbb C^{p,q}$ and write $\|v\|_{p,q}^2=v^HH_{p,q}v$. Then for any $r\in\mathbb R$, we have $M(v+r\eta)=Mv$ and
$$
\|Mv\|_{p,q}^2=\|M(v+r\eta)\|_{p,q}^2\geq \|v+r\eta\|_{p,q}^2=\|v\|_{p,q}^2+r(\eta^HH_{p,q}v+v^HH_{p,q}\eta).
$$

Now if $v$ is chosen such that $\textrm{Re}(v^HH_{p,q}\eta)\neq 0$, then the above inequality cannot hold for every $r\in\mathbb R$ and hence we get a contradiction. Consequently, $ker(M)$ does not contain any non-zero null vector and therefore the image of any positive semi-definite $r$-plane under $M$ is still an $r$-plane and the set of indeterminacy $Z\subset Gr(r,\mathbb C^{p,q})$ of $M$ (as a linear map on $Gr(r,\mathbb C^{p,q}))$ is disjoint from $\overline{D^r_{p,q}}$. Since both $Z$ and $\overline{D^r_{p,q}}$ are closed in $Gr(r,\mathbb C^{p,q})$ (and hence compact), there is an open neighborhood of $\overline{D^r_{p,q}}$ disjoint from $Z$ and now the theorem follows.
\end{proof}

\noindent\textbf{Remark.} The non-minimality is indeed necessary to guarantee the extension across the entire boundary. See Example~\ref{no extension} for a minimal linear self map which does not extend across some boundary point.

%{
%\begin{lem}
%Let $M$ be a linear fractional map. Then $M\colon D_{p,q}\rightarrow D_{p,q}$ is
%injective if and only if $dM_z$ is injective for some $z\in \overline D_{p,q}$.
%\end{lem}}

\section{Automorphisms on generalized balls}\label{automorphisms}

In this section, we are going to study in detail the automorphisms on the generalized balls $D_{p,q}$, regarding their fixed point sets and also their normal form. We begin by determining the automorphism group of $D_{p,q}$.

\begin{thm}
Every automorphism of $D_{p,q}$ is a linear self map and thus extends to an automorphism of the ambient projective space $\mathbb P^{p+q-1}$. The automorphism group $\textrm{Aut}(D_{p,q})$ of $D_{p,q}$ is isomorphic to $PU(p,q)$, the projectivization of the indefinite unitary group $U(p,q)$. In particular, every automorphism of $D_{p,q}$ can be represented by a matrix in $U(p,q)$.
\end{thm}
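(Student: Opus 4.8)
\emph{Overall strategy.} The assertion has two halves: the easy inclusion $PU(p,q)\subseteq\Aut(D_{p,q})$ and the substantive claim that \emph{every} automorphism of $D_{p,q}$ is a linear self map; everything else is then formal. For the easy inclusion, every $g\in U(p,q)$ preserves $H_{p,q}$ and hence the type of every complex line, so it restricts to a biholomorphism of $D_{p,q}=D^1_{p,q}$ with inverse $g^{-1}$; since scalar matrices act trivially on $\PP^{p+q-1}$, this yields an injective homomorphism $PU(p,q)\hookrightarrow\Aut(D_{p,q})$ landing among the linear self maps. Conversely, a linear automorphism is in particular a \emph{surjective} linear self map, so by Theorem~\ref{isometrythm} it is represented by a matrix in $U(p,q)$, hence is an honest element of $PGL(p+q;\CC)=\Aut(\PP^{p+q-1})$ preserving $\overline{D_{p,q}}$. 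Thus, once we know an arbitrary automorphism is linear, it automatically extends to $\PP^{p+q-1}$, it is represented by an element of $U(p,q)$, and the homomorphism $PU(p,q)\to\Aut(D_{p,q})$ is surjective, hence an isomorphism. So the whole theorem reduces to showing: \emph{every $f\in\Aut(D_{p,q})$ is linear.}

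\emph{Reducing to the boundary.} Since $D_{p,q}$ is a single $SU(p,q)$-orbit, $PU(p,q)$ acts transitively on it; replacing $f$ by $h\circ f$ for a suitable $h\in PU(p,q)$, I may assume $f$ fixes the base point $o=[e_1]$, and it then suffices to show such an $f$ lies in $PU(p,q)$. The plan is to pass to the boundary $\partial D_{p,q}$, which is the projectivized null cone of $H_{p,q}$ --- a compact, connected, real-algebraic, Levi-nondegenerate hypersurface that in any affine chart centered at one of its points is the standard hyperquadric of signature $(p-1,q-1)$. First I would show that $f$ (and $f^{-1}$) extend to CR diffeomorphisms of $\partial D_{p,q}$; for real-analytic Levi-nondegenerate boundaries such boundary regularity of biholomorphisms is accessible via Segre-variety / reflection-principle arguments, with the homogeneity of $D_{p,q}$ under $PU(p,q)$ supplying the uniform boundary geometry. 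The CR automorphisms of this hyperquadric that preserve the side containing $D_{p,q}$ are induced exactly by $PU(p,q)$, so there is $g\in PU(p,q)$ with $f|_{\partial D_{p,q}}=g|_{\partial D_{p,q}}$; as $g$ is globally defined on $\PP^{p+q-1}$, both $f$ and $g$ extend continuously to $\overline{D_{p,q}}$ and agree on $\partial D_{p,q}$, whence $f=g$ by the maximum principle. An alternative that sidesteps boundary regularity: $\Aut(D_{p,q})$ must permute the maximal compact complex submanifolds of $D_{p,q}$, which are exactly the $\PP^{p-1}$'s coming from the positive definite $p$-planes and which form the type-I bounded symmetric domain $D^p_{p,q}$ (the linear cycle space, cf.~\cite{FHW}); the resulting homomorphism $\Aut(D_{p,q})\to\Aut(D^p_{p,q})$ is injective because the intersection of all $\PP^{p-1}$'s through a point of $D_{p,q}$ is that point, and one then concludes from $\Aut(D^p_{p,q})\supseteq PU(p,q)$ together with a ``side'' argument when $p=q$.

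\emph{Main obstacle.} The crux is the failure of the usual rigidity machinery: for $p,q\ge 2$ the domain $D_{p,q}$ is neither Stein nor Kobayashi hyperbolic (it contains the compact submanifolds $\PP^{p-1}$) and its isotropy groups are non-compact, so Cartan's uniqueness theorem at an interior fixed point and the usual normal-family / Wong--Rosay type arguments are unavailable, and the rigidity has to be drawn out of the ambient projective-algebraic structure. Accordingly, the technical heart is either (i) extending an a priori merely biholomorphic self-map across the non-pseudoconvex boundary $\partial D_{p,q}$ and classifying the side-preserving CR automorphisms of the hyperquadric, or (ii) identifying the maximal compact subvarieties of $D_{p,q}$ and controlling the induced action on its cycle space; the easy inclusion and the formal reductions in the first paragraph are routine by comparison.
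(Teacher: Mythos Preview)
Your formal reduction is exactly what the paper does: once linearity is known, apply Theorem~\ref{isometrythm} to get a $U(p,q)$-representative, and then the identification $\Aut(D_{p,q})\cong PU(p,q)$ is immediate. The difference is only in the treatment of the hard step, namely that every automorphism is linear. The paper does not prove this at all: for $p=1$ or $q=1$ it declares the result well known, and for $p,q\ge 2$ it simply cites Baouendi--Huang~\cite{BH} and Ng~\cite{ng1}. Your two sketched routes are precisely the ideas behind those two references --- the first (boundary extension plus classification of CR automorphisms of the hyperquadric) is the Segre-variety/reflection circle of ideas underlying~\cite{BH}, and the second (pass to the cycle space $D^p_{p,q}$ of maximal linear $\PP^{p-1}$'s and use that its automorphism group is known) is Ng's geometric argument in~\cite{ng1}. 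So you have not taken a different route so much as unpacked the citations the paper invokes.

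Two small cautions on your sketch. In the boundary approach, the step ``$f=g$ by the maximum principle'' is not literally available for $p\ge 2$ since $D_{p,q}$ carries nonconstant bounded holomorphic maps but also compact subvarieties; what one actually uses is that a holomorphic map agreeing with a linear map along the generic real-analytic hypersurface $\partial D_{p,q}$ extends holomorphically across it and then identity-theorem/unique continuation applies. In the cycle-space approach, you need as input that $\Aut(D^p_{p,q})$ equals $PU(p,q)$ (up to the obvious $\ZZ/2$ when $p=q$), i.e.\ the classical determination of the automorphism group of a type-I bounded symmetric domain; this is standard, but it is an external ingredient of the same weight as the citations the paper uses.
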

\begin{proof}
The statements are well-known for the complex unit balls $D_{1,q}$ and also for the complements of the complex unit balls $D_{p,1}$. Suppose now $p,q\geq 2$. Then, it has been shown by Baouendi-Huang~\cite{BH} and (for a more geometric proof, see Ng~\cite{ng1}) that every automorphism of $D_{p,q}$ is necessarily a linear map. Now by Theorem~\ref{isometrythm} here (or Lemma 2.13 in~\cite{ng1}), we see that every automorphism can be represented by a matrix in $U(p,q)$. Since two elements in $U(p,q)$ represent the same automorphism of $D_{p,q}$ if and only if they are scalar multiples of each other, it follows now that $\textrm{Aut}(D_{p,q})\cong PU(p,q)$.
\end{proof}

\begin{cor}\label{extension cor}
The action of $\textrm{Aut}(D_{p,q})$ extends real-analytically to $\partial D_{p,q}$.
\end{cor}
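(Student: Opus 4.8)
The plan is to deduce Corollary~\ref{extension cor} directly from the previous theorem, which identifies $\mathrm{Aut}(D_{p,q})$ with $PU(p,q)$, together with the fact that every automorphism is the restriction of a linear map on $\mathbb P^{p+q-1}$. The point is that each $g\in\mathrm{Aut}(D_{p,q})$ is represented by some $M\in U(p,q)$, hence $M$ is invertible and in particular satisfies $M^HH_{p,q}M=H_{p,q}$; consequently $M$ preserves the set of null vectors of $H_{p,q}$, and the associated projective-linear map $\tilde M$ on $\mathbb P^{p+q-1}$ carries $\partial D_{p,q}$ (the set of null lines) bijectively onto itself. Thus the action of $g$ on $\overline{D_{p,q}}=D_{p,q}\cup\partial D_{p,q}$ is simply the restriction of the projective-linear automorphism $\tilde M$ of $\mathbb P^{p+q-1}$ to $\overline{D_{p,q}}$, and $\tilde M$ is visibly real-analytic (indeed rational) on all of $\mathbb P^{p+q-1}$ since $M$ is invertible and has no indeterminacy locus.

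Concretely, I would first recall from Theorem~\ref{isometrythm} (or from the automorphism theorem just proved) that every $g\in\mathrm{Aut}(D_{p,q})$ has a matrix representation $M\in U(p,q)$, so that $M$ is a linear isomorphism of $\mathbb C^{p,q}$ preserving $H_{p,q}$. Next I would note that $\overline{D_{p,q}}\subset\mathbb P^{p+q-1}$ is precisely the set of lines $[z]$ with $z^HH_{p,q}z\ge 0$, and that $M$ maps this set bijectively onto itself: it is a self map of $D_{p,q}$ by Theorem~\ref{expansion matrix}, $M^{-1}\in U(p,q)$ is too, and by the continuity argument already used in the proof of Theorem~\ref{expansion matrix}, $z^HH_{p,q}z\ge 0\iff (Mz)^HH_{p,q}(Mz)\ge 0$. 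Then I would observe that, since $\ker M=0$, the induced map $\tilde M\colon\mathbb P^{p+q-1}\to\mathbb P^{p+q-1}$ is everywhere defined and given in homogeneous coordinates by $[z]\mapsto[Mz]$, which is a real-analytic (rational) map of $\mathbb P^{p+q-1}$. Restricting $\tilde M$ to $\partial D_{p,q}$, which is a real-analytic submanifold of $\mathbb P^{p+q-1}$, yields a real-analytic self map of $\partial D_{p,q}$; applying the same to $M^{-1}$ shows it is a real-analytic diffeomorphism. Hence the assignment $g\mapsto\tilde M|_{\partial D_{p,q}}$ gives the desired real-analytic extension of the action.

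Since the heart of the statement has already been done in the preceding theorem, there is essentially no serious obstacle: the only minor points to be careful about are (i) well-definedness, i.e.\ that the extension does not depend on the choice of representative $M\in U(p,q)$, which is immediate because two such representatives differ by a unimodular scalar and hence induce the same projective-linear map; and (ii) checking that the group action is genuinely real-analytic as a map $\mathrm{Aut}(D_{p,q})\times\partial D_{p,q}\to\partial D_{p,q}$, not merely real-analytic in the $\partial D_{p,q}$-variable for each fixed $g$ --- but this too follows from the fact that $PU(p,q)$ is a real Lie group acting on $\mathbb P^{p+q-1}$ through the real-analytic evaluation map $([M],[z])\mapsto[Mz]$, and $\partial D_{p,q}$ is an invariant real-analytic submanifold. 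I would therefore keep the proof to a few lines, citing Theorem~\ref{isometrythm} for the $U(p,q)$-representation and the description of $\partial D_{p,q}$ as the null cone in $\mathbb P^{p+q-1}$, and simply remark that the action is the restriction of the standard $PU(p,q)$-action on $\mathbb P^{p+q-1}$.
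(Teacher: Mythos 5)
Your argument is correct and is exactly the intended deduction: the paper states this as an immediate corollary of the preceding theorem, the point being precisely that each automorphism is induced by an invertible $M\in U(p,q)$, whose associated projective-linear map is globally defined and real-analytic on $\mathbb P^{p+q-1}$ and preserves the null cone $\partial D_{p,q}$. Your additional remarks on well-definedness and joint real-analyticity of the $PU(p,q)$-action are harmless elaborations of the same idea.
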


%\begin{definition}
%For every $x_1, x_2, y_1, y_2\in \partial D_{p,q}$, if there exists $f\in \Aut(D_{p,q})$ such that $f(x_1)=y_1$ and $f(x_2)=y_2$, then we say that $\Aut(D_{p,q})$ is doubly transitive on $\partial D_{p,q}$.
%\end{definition}

%If $M$ satisfies $M^H \left(
%\begin{array}{cc}
%  I_p & 0 \\
%    0 & -I_q
%  \end{array}
%\right) M =\left(
%\begin{array}{cc}
%  I_p & 0 \\
%    0 & -I_q
%  \end{array}
%\right)$, that is, $M\in U(p,q)$, $M$ is an automorphism of $D_{p,q}$ and moreover $\Aut(D_{p,q})=U(p,q)$.

The following version of Witt's theorem is very useful in studying the various transitivities of $\textrm{Aut}(D_{p,q})$.

\begin{lem}[Witt \cite{Witt}] \label{cor_Witt theorem}
Let $X$ be a complex vector space equipped with a non-degenerate Hermitian form and $Y \subset  X$ be any complex vector subspace. Then any isometric embedding $f : Y \rightarrow X$ extends to an isometry $F$ of $X$.
\end{lem}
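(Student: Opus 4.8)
The plan is to argue by induction on the dimension of the radical $Y_0 := Y \cap Y^{\perp}$ of the form restricted to $Y$, the base case being when this restriction is nondegenerate, i.e.\ $Y_0 = 0$.

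\emph{Nondegenerate case.} When $Y_0 = 0$ the ambient space splits orthogonally as $X = Y \oplus Y^{\perp}$, and likewise $X = f(Y) \oplus f(Y)^{\perp}$, since $f$ is an isometry onto $f(Y)$, which is then also nondegenerate. Because $f$ restricts to an isometry $Y \to f(Y)$, the subspaces $Y^{\perp}$ and $f(Y)^{\perp}$ have the same dimension; and by additivity of the signature under orthogonal direct sums together with Sylvester's law of inertia for Hermitian forms over $\mathbb{C}$, they carry isometric nondegenerate Hermitian forms. Gluing $f$ to any such isometry $Y^{\perp} \to f(Y)^{\perp}$ produces the desired isometry $F$ of $X$.

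\emph{Reduction step.} When $Y_0 \neq 0$, pick a nonzero $u \in Y_0$; note $u$ is isotropic and $[u,y]=0$ for all $y\in Y$. Since the form on $X$ is nondegenerate there is $u^{*} \in X$ with $[u, u^{*}] = 1$, and replacing $u^{*}$ by $u^{*} + t u$ for a suitable $t \in \mathbb{R}$ we may assume $[u^{*}, u^{*}] = 0$; then $u^{*} \notin Y$. On the image side, $f(u)$ lies in the radical of $f(Y)$, so in the same way one finds an isotropic $v^{*} \in X$ with $[f(u), v^{*}] = 1$; using nondegeneracy of the form on $X$ once more, one can choose $v^{*}$ so that in addition $[v^{*}, f(y)] = [u^{*}, y]$ for every $y \in Y$, and then a further correction along the line $\mathbb{C}\,f(u)$ — which lies in the radical of $f(Y)$, hence is orthogonal to $f(Y)$ — restores isotropy and the normalization $[f(u),v^{*}]=1$ without disturbing the pairings with $f(Y)$; note $v^{*}\notin f(Y)$. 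Set $Y' := Y \oplus \mathbb{C} u^{*}$ and define $f' : Y' \to X$ by $f'|_{Y} = f$, $f'(u^{*}) = v^{*}$. The choices above make $f'$ an isometric embedding, and the radical of $Y'$ is strictly smaller than $Y_0$: a short computation using $[u, u^{*}] = 1$ forces the $u^{*}$-component of any radical vector of $Y'$ to vanish, so $\operatorname{rad}(Y') \subseteq Y_0$, while $u \in Y_0$ fails to lie in $\operatorname{rad}(Y')$ because $u^{*} \in Y'$ and $[u,u^{*}]\neq 0$. Applying the inductive hypothesis to $f'$ yields an isometry $F$ of $X$ extending $f'$, hence $f$.

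The routine points — the explicit corrections of $u^{*}$ and $v^{*}$, the verification that $f'$ is isometric, and the radical count — I would dispatch in a line or two of computation. The main obstacle, and the only place needing genuine care, is the reduction step: the hyperbolic partner $v^{*}$ on the image side must be chosen \emph{compatibly}, so that the prescribed pairings $[v^{*}, f(y)] = [u^{*}, y]$, the normalization $[f(u), v^{*}] = 1$, and the isotropy $[v^{*}, v^{*}] = 0$ are met simultaneously — which is exactly where nondegeneracy of $[\cdot,\cdot]$ on $X$ enters. The only external ingredient is the classification of nondegenerate Hermitian forms over $\mathbb{C}$ by signature, which is elementary.
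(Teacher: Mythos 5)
Your proof is correct. Note that the paper offers no argument for this lemma at all: it is quoted directly from Witt's 1937 paper, so there is no internal proof to compare against. What you have written is the standard proof of the Witt extension theorem in the Hermitian setting --- induction on $\dim(Y\cap Y^{\perp})$, with hyperbolic-pair completion in the degenerate case and the signature classification of nondegenerate Hermitian forms (Sylvester) in the base case --- and all the points you flag as delicate do go through. In the reduction step, the conjugate-linear functional $f(y)\mapsto [u^{*},y]$ on $f(Y)$ is represented by some $v^{*}\in X$ by nondegeneracy of the form on $X$; this already forces $[v^{*},f(u)]=[u^{*},u]=\overline{[u,u^{*}]}=1$, so the normalization is automatic rather than an extra constraint; and the correction $v^{*}\mapsto v^{*}+s\,f(u)$ with $s$ real kills $[v^{*},v^{*}]$ without disturbing the pairings with $f(Y)$ because $f(u)$ is isotropic and lies in the radical of $f(Y)$. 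The verification that $f'$ is isometric and injective and that the radical strictly drops (via $[u,u^{*}]=1$) is the routine computation you describe. One small caveat worth stating explicitly: both the orthogonal splitting $X=Y\oplus Y^{\perp}$ in the base case and the appeal to Sylvester's law use finite-dimensionality of $X$, which is all the paper needs since it only ever applies the lemma to $X=\mathbb{C}^{p,q}$.
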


\begin{thm}\label{doubly transitive} For $u\in \mathbb C^{p,q}$, let $[u]$ be its projectivization in  $\mathbb P^{p+q-1}$.
\begin{enumerate}
%\item[]
\item
 $\text{Aut}(D_{p,q})$ is transitive on $D_{p,q}$ and also on $\partial D_{p,q}$.
\item
Let $[v_1]$, $[v_2]$, $[w_1]$, $[w_2]\in \partial D_{p,q}$, where $[v_1]\neq [v_2]$ and $[w_1]\neq [w_2]$. Then, there exists $M\in \textrm{Aut}(D_{p,q})$ such that $M([v_j])=[w_j]$ for $j=1,2$ if and only if there exists non-zero $\alpha\in\mathbb C$ such that $v_1^HH_{p,q}v_2=\alpha w_1^HH_{p,q}w_2$.
\end{enumerate}
\end{thm}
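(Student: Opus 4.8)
The plan is to reduce everything to Witt's theorem (Lemma~\ref{cor_Witt theorem}) applied to the indefinite inner product space $\mathbb C^{p,q}$, keeping track of the projectivization. For part (1), transitivity on $D_{p,q}$ follows because $D_{p,q}$ is an $SU(p,q)$-orbit (stated in the introduction), but one can also see it directly: any two positive vectors $u_1,u_2$ can be scaled so that $u_1^HH_{p,q}u_1=u_2^HH_{p,q}u_2=1$, and then the map sending the one-dimensional subspace $\mathbb Cu_1$ isometrically onto $\mathbb Cu_2$ extends by Witt to an isometry $F\in U(p,q)$; its projectivization lies in $\mathrm{Aut}(D_{p,q})$ and sends $[u_1]$ to $[u_2]$. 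Transitivity on $\partial D_{p,q}$ is the same argument with null vectors in place of positive vectors: $\partial D_{p,q}$ consists exactly of the $[u]$ with $u^HH_{p,q}u=0$ and $u\neq 0$, and since all nonzero null vectors have the same ``norm'' $0$, the map $\mathbb Cv_1\to\mathbb Cv_2$, $v_1\mapsto v_2$, is automatically an isometric embedding of subspaces, hence extends to $F\in U(p,q)$.

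For the ``only if'' direction of part (2): if $M\in\mathrm{Aut}(D_{p,q})$ is represented by a matrix $U\in U(p,q)$ with $[Uv_j]=[w_j]$, then $Uv_j=c_jw_j$ for some nonzero scalars $c_1,c_2$, and since $U$ preserves $H_{p,q}$ we get $v_1^HH_{p,q}v_2=(Uv_1)^HH_{p,q}(Uv_2)=\overline{c_1}c_2\,w_1^HH_{p,q}w_2$, so $\alpha=\overline{c_1}c_2$ works. The substantive direction is ``if''. Given the scalar $\alpha$ with $v_1^HH_{p,q}v_2=\alpha\,w_1^HH_{p,q}w_2$, I would first absorb $\alpha$ by rescaling one of the representatives — replace $w_2$ by a suitable scalar multiple (writing $\alpha=\overline{\mu}$ and replacing $v_1$ by $\mu v_1$, or symmetrically adjusting $w$'s) so that after rescaling one has $v_1^HH_{p,q}v_2=w_1^HH_{p,q}w_2$ exactly, while of course $v_j^HH_{p,q}v_j=w_j^HH_{p,q}w_j=0$ since all four points are on the boundary. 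Then the assignment $v_j\mapsto w_j$ defines a linear map $f$ from $Y:=\mathrm{span}(v_1,v_2)$ to $\mathrm{span}(w_1,w_2)$ which preserves all inner products among basis vectors, hence is an isometric embedding $Y\hookrightarrow\mathbb C^{p,q}$; by Witt it extends to an isometry $F$ of $\mathbb C^{p,q}$, i.e. a matrix in $U(p,q)$, whose projectivization $M\in\mathrm{Aut}(D_{p,q})$ satisfies $M([v_j])=[w_j]$.

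The one genuine subtlety — and the step I expect to require the most care — is the well-definedness of $f$ as a linear map, which hinges on the pairs $\{v_1,v_2\}$ and $\{w_1,w_2\}$ being linearly independent (so that $Y$ is genuinely two-dimensional and $f$ is determined by the values on a basis). This is exactly where the hypotheses $[v_1]\neq[v_2]$ and $[w_1]\neq[w_2]$ enter: two nonzero vectors are linearly independent precisely when their projectivizations are distinct. I would also need to note the degenerate possibility that the restriction of $H_{p,q}$ to $Y$ is itself degenerate (which can happen since $v_1,v_2$ are null); this is harmless because Witt's theorem as stated applies to an arbitrary subspace $Y$, not only to nondegenerate ones, so no extra argument is needed. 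Finally one should remark that the constructed $F$ need not be unique — there is freedom in the choice of scalars and in the Witt extension — which is consistent with the statement, as only existence is claimed.
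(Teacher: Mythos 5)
Your proposal is correct and follows essentially the same route as the paper: both parts are reduced to Witt's extension theorem (Lemma~\ref{cor_Witt theorem}) after rescaling representatives so that the relevant one- or two-dimensional assignment becomes an isometric embedding. The extra points you flag (linear independence from $[v_1]\neq[v_2]$, and the fact that Witt applies even when $H_{p,q}$ restricted to $\mathrm{Span}\{v_1,v_2\}$ is degenerate) are correct and, if anything, slightly more careful than the paper's own write-up.
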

\begin{proof}
{ 1. Let $[u_1]$, $[u_2]\in D_{p,q}$. Then we choose some $k>0$ such that the linear map $f:\mathbb Cu_1\rightarrow \mathbb C^{p,q}$, defined by $u_1\mapsto ku_2$ is an isometric embedding. By Lemma~\ref{cor_Witt theorem} $f$ extends to an isometry of $\mathbb C^{p,q}$ and therefore there is an automorphism of $D_{p,q}$ mapping $[u_1]$ to $[u_2]$. Similarly, for any two points $[v_1]$, $[v_2] \in \partial D_{p,q}$. The map
$i\colon \mathbb C v_1\rightarrow \mathbb C^{p,q}$ defined by $v_1\mapsto v_2$ is an isometric embedding and hence there exists an automorphism of $D_{p,q}$ such mapping $[v_1]$ to $[v_2]$.
}

2. Suppose that there exists $\alpha\in\mathbb C^*$ such that $v_1^HH_{p,q}v_2=\alpha w_1^HH_{p,q}w_2$. Let $\phi : Y \rightarrow \CC^{p,q}$, where $Y=\textrm{Span}\{v_1, v_2\}$ be the linear embedding defined by $\phi(v_1)=w_1$ and $\phi(v_2)=w_2$. By replacing $w_1$ by some scalar multiple, we can make $v_1^HH_{p,q}v_2=w_1^HH_{p,q}w_2$. Then $\phi$ is an isometric embedding and hence $\phi$ extends to an isometry of $\mathbb C^{p,q}$ by Lemma \ref{cor_Witt theorem} and the desired result follows. The converse is trivial.
\end{proof}

\noindent\textbf{Remark.}
Theorem~\ref{doubly transitive} is a generalization of the double transitivity of the automorphism groups of the complex unit balls on their boundaries. This is because for $[v_1], [v_2]\in\partial D_{1,q}$ with $[v_1]\neq [v_2]$, we always have $v_1^HH_{p,q}v_2\neq 0$ since otherwise we would get a two dimensional isotropic subspace in $\mathbb C^{1,q}$.

\subsection{Fixed points on $D_{p,q}$ and $\partial D_{p,q}$}

Let $A\in U(p,q)$ and denote also by $A$ the corresponding automorphism of $D_{p,q}$.
It follows directly from the definition of the matrix representation of a linear self map that the fixed points of $A$ (as an automorphism on $D_{p,q}$) correspond precisely to the one-dimensional eigenspaces (or projectivized eigenvectors) of $A$ associated to the non-zero eigenvalues. The following simple observation regarding eigenvalues and eigenvectors of matrices in $U(p,q)$ will be very useful in studying the fixed points of the automorphisms of $D_{p,q}$.

\begin{lem}\label{ortho lemma}
Let $A\in U(p,q)$. If $\lambda_1$, $\lambda_2$ are eigenvalues of $A$ and $v_1$, $v_2$ are two eigenvectors associated to them respectively, then either $v_1$ and $v_2$ are orthogonal with respect to $H_{p,q}$ or $\overline{\lambda_2}\lambda_1=1$.
\end{lem}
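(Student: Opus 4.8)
The plan is to exploit the defining property of $A \in U(p,q)$, namely that $A$ preserves the Hermitian form $H_{p,q}$: for all $x,y \in \mathbb{C}^{p,q}$ we have $(Ay)^H H_{p,q} (Ax) = y^H H_{p,q} x$. First I would apply this identity to the two eigenvectors $v_1, v_2$ with $A v_j = \lambda_j v_j$. On the left-hand side we get $(A v_2)^H H_{p,q} (A v_1) = \overline{\lambda_2} \lambda_1 \, v_2^H H_{p,q} v_1$, while on the right-hand side we simply get $v_2^H H_{p,q} v_1$. Therefore
\[
(\overline{\lambda_2}\lambda_1 - 1)\, v_2^H H_{p,q} v_1 = 0.
\]
Since $A$ is invertible (it lies in $U(p,q)$), both eigenvalues $\lambda_1,\lambda_2$ are non-zero, so the factor $\overline{\lambda_2}\lambda_1 - 1$ is a well-defined complex number. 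If it is non-zero, we may divide by it to conclude $v_2^H H_{p,q} v_1 = 0$, i.e.\ $v_1$ and $v_2$ are orthogonal with respect to $H_{p,q}$; otherwise $\overline{\lambda_2}\lambda_1 = 1$. This is exactly the dichotomy claimed.

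There is essentially no obstacle here; the only point requiring a word of care is the observation that eigenvalues of an element of $U(p,q)$ are non-zero, which is immediate from invertibility, and that $H_{p,q}$ being non-degenerate is not even needed for the argument in this form — what is used is only that $A$ is an isometry of $(\mathbb{C}^{p,q}, H_{p,q})$. I would write the proof in three or four lines: state the isometry identity, substitute $x = v_1$, $y = v_2$, read off the factored equation, and split into the two cases. If desired, one can add the remark that when $|\lambda_1| = |\lambda_2| = 1$ (for instance if $A$ has finite order, or more generally when the eigenvalue in question has modulus one) the condition $\overline{\lambda_2}\lambda_1 = 1$ reduces to $\lambda_1 = \lambda_2$, but this is not needed for the statement as given.
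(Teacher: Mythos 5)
Your proof is correct and is exactly the paper's own argument: both apply the isometry identity $v_2^H A^H H_{p,q} A v_1 = v_2^H H_{p,q} v_1$ to the eigenvectors and read off $(\overline{\lambda_2}\lambda_1 - 1)\,v_2^H H_{p,q} v_1 = 0$. Nothing further is needed.
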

\begin{proof}
The result follows from
$
v_2^H H_{p,q} v_1= v_2^HA^H H_{p,q} Av_1=\left(\ov{\lambda}_2\lambda_1\right)\,v^H_2 H_{p,q} v_1.
$
\end{proof}

Let $\ov{D_{p,q}}:=D_{p,q}\cup\partial D_{p,q}$ be the closure of $D_{p,q}$. Since the closed complex unit balls $\overline{\mathbb B^q}\cong\overline{D_{1,q}}$ is a convex compact set in $\mathbb C^q\cong \mathbb R^{2n}$, it follows from Corollary~\ref{extension cor} and Brouwer's fixed point theorem that every element in $Aut(D_{1,q})$ has a fixed point in $\overline{D_{1,q}}$. 
When $p\geq 2$, any $D_{p,q}$ cannot be embedded as a convex compact set in some Euclidean space since it contains positive dimensional projective subspaces (see~\cite{ng1}). Nevertheless, with a bit of ``detour", we will still be able to use  Brouwer's fixed point theorem to get a fixed point in the closure for \textit{any} linear self map of $D_{p,q}$. The proof will be given in Section~\ref{linear maps section}. Hence, we have

\begin{thm}\label{existence of fixed points}
Every element in $\textrm{Aut}(D_{p,q})$ has a fixed point on $\ov{D_{p,q}}:=D_{p,q}\cup\partial D_{p,q}$.
\end{thm}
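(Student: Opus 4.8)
The plan is to reduce the assertion to a statement about eigenvalues of a matrix $A \in U(p,q)$ representing the given automorphism, and then produce an eigenvector whose projectivization lies in $\overline{D_{p,q}}$. Since a fixed point of $A$ on $\overline{D_{p,q}}$ corresponds to a one-dimensional eigenspace $[v]$ with $v^H H_{p,q} v \geq 0$ (positive gives an interior fixed point, zero a boundary one), the task is to show that among the eigenvectors of $A$ at least one is positive semi-definite with respect to $H_{p,q}$. First I would diagonalize or at least put $A$ into a convenient form: because $A$ preserves the non-degenerate form $H_{p,q}$, its eigenvalues come in pairs $\lambda, \overline{\lambda}^{-1}$ (the generalized eigenspaces pair up under $H_{p,q}$), and Lemma~\ref{ortho lemma} tells us that eigenvectors for $\lambda_1, \lambda_2$ are $H_{p,q}$-orthogonal unless $\overline{\lambda_2}\lambda_1 = 1$. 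The delicate eigenvalues are those on the unit circle, since only they can carry eigenvectors of indefinite or zero norm in a way that is not forced.

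The main step is the promised ``detour'' through Brouwer, which the paper defers to Section~\ref{linear maps section}: rather than arguing purely linear-algebraically, I would restrict $A$ to a suitable $A$-invariant positive semi-definite subspace and invoke a genuine fixed point theorem there. Concretely, decompose $\mathbb{C}^{p,q}$ into $A$-invariant pieces according to whether eigenvalues lie inside, on, or outside the unit circle; the ``inside'' and ``outside'' pieces pair up and each is $H_{p,q}$-isotropic (totally null), while the unit-circle part $W$ is $A$-invariant and $H_{p,q}\!\restriction_W$ is non-degenerate of some signature $(p', q')$ with $p' \geq 1$ (one checks $p'\geq 1$ because the total signature is $(p,q)$ with $p \geq 1$ and the off-circle parts contribute signature $(k,k)$). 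On $W$ the restriction of $A$ lies in $U(p',q')$ and, being semisimple with unimodular eigenvalues, is conjugate within $U(p',q')$ to a unitary-type element; then the induced automorphism of $D_{p',q'} \subset \mathbb{P}(W)$ extends continuously to $\overline{D_{p',q'}}$ (Corollary~\ref{extension cor}) and one applies Brouwer on the closed ball model after reducing, via transitivity (Theorem~\ref{doubly transitive}) and the structure of elliptic elements, to the $p'=1$ case where $\overline{D_{1,q'}} \cong \overline{\mathbb{B}^{q'}}$ is convex and compact. Any fixed point obtained this way in $\overline{D_{p',q'}}$ is a projectivized positive semi-definite eigenvector of $A$ in $\mathbb{C}^{p,q}$, hence a fixed point of $A$ on $\overline{D_{p,q}}$.

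The hard part will be justifying that one may genuinely reduce to an honest closed ball: the unit-circle block $W$ need not have $p'=1$, and $D_{p',q'}$ for $p'\geq 2$ is exactly the non-convex situation the introduction warns about, so the reduction must exploit that an elliptic $A\!\restriction_W$ is simultaneously diagonalizable and that its eigenspaces split $W$ orthogonally into definite pieces — allowing one to pick a positive eigenvector directly, or equivalently to localize to a one-dimensional positive-definite $A$-invariant subspace. The other point requiring care is the ``pairing'' claim for off-circle eigenvalues and the verification that the non-unimodular part contributes balanced signature $(k,k)$, which follows from Lemma~\ref{ortho lemma} together with non-degeneracy of $H_{p,q}$ on each generalized eigenspace sum $E_\lambda \oplus E_{\overline{\lambda}^{-1}}$; once this bookkeeping is in place, the existence of the desired positive semi-definite eigenvector — and hence the fixed point — is immediate. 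I would present the Brouwer detour in full only in Section~\ref{linear maps section} as promised, and here simply invoke it.
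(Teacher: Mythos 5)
Your reduction has two genuine gaps, both at the step where you try to extract a positive semi-definite eigenvector from the unit-circle spectral block $W$. First, the claim that $A\!\restriction_W$ is ``semisimple with unimodular eigenvalues'' is false: an element of $U(p',q')$ can have all eigenvalues on the unit circle and still carry nontrivial Jordan blocks. The paper's Example~\ref{nondiagonalizable} is exactly such a matrix --- an element of $U(2,2)$ with characteristic polynomial $(1-x)^4$ and only two independent eigenvectors. For such an $A$ there is no orthogonal splitting of $W$ into definite eigenspaces, so you cannot ``pick a positive eigenvector directly.'' (The gap is reparable: the computation in Proposition~\ref{fixed point}(2) shows that the eigenvector at the bottom of a higher-rank Jordan block with unimodular eigenvalue is automatically a null vector, hence already a boundary fixed point; but you would need to say this.) Second, your signature bookkeeping only gives $p'=p-k\geq 0$, not $p'\geq 1$: for a hyperbolic element of $U(1,1)$ such as $\left(\begin{smallmatrix}\cosh t&\sinh t\\ \sinh t&\cosh t\end{smallmatrix}\right)$ one has $W=\{0\}$. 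In that case the fixed point must come from an eigenvector inside one of the isotropic off-circle pieces $V_{\mathrm{in}}$, $V_{\mathrm{out}}$ (any nonzero invariant subspace contains an eigenvector, and here it is necessarily null), a case your argument does not cover. Until both cases are handled, the proof does not close.

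You should also know that the ``detour'' the paper actually takes in Theorem~\ref{general existence of fixed points} is entirely different from your spectral decomposition: the matrix $A$ also acts on the Grassmannian of $p$-planes and induces a self map of $D^p_{p,q}$, which \emph{is} a bounded convex domain in its Harish-Chandra realization; Brouwer applied there yields an $A$-invariant positive semi-definite $p$-plane $E_0$, and any eigenvector of $A\!\restriction_{E_0}$ with nonzero eigenvalue is a positive or null vector, giving the fixed point. That argument avoids all Jordan-form and signature case analysis and applies to arbitrary linear self maps, not just automorphisms. Your approach can be made to work, but only after the two repairs above, and it ends up re-proving by hand what the convexity of $D^p_{p,q}$ gives for free.
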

\begin{proof}
Follows directly from Theorem~\ref{general existence of fixed points}.
\end{proof}

%$$P(\lambda) = \det(\lambda I_n -A) = \prod_{i=1}^{k}(\lambda -\lambda_i)^{k_i}$$
%with eigenvalues $\lambda_i$ of algebraic multiplicity $k_i$,
%$1\leq i\leq k\leq n$ and $\sum_{i=1}^k k_i=n$.
%Let $$E_i = \{u\in \CC^n : Au = \lambda_i u\}$$ be an eigenspace of $\lambda_i$.
%In case $\dim E_i = k_i$, there exists a basis consisting of $k_i$ linearly independent eigenvectors $u^i_1,\ldots, u^i_{k_i}$. In case $\dim E_i < k_i$, we need to consider the generalized eigenspace
%$$\tilde{E}_i = \{ u\in \CC^n : (A-\lambda_i I_{n,n})^r u=0 \text{ for some } r\in \NN\}.$$
%In this case there is a chain of generalized eigenvectors such that
%\begin{eqnarray*}
%0&=&(A-\lambda_i I_{n,n})v_1 \\
%v_1 &=& (A-\lambda_i I_{n,n})v_2 \\
%&&\vdots\\
%v_{r-1} &=& (A-\lambda_i I_{n,n})v_r.
%\end{eqnarray*}
%With this generalized eigenvectors, we can complete the basis of $\CC^n$ by
%$$
% \begin{array}{cccc}
%  v^1_1& v^2_1& \ldots & v^k_1 \\
%  v^1_2 & v^2_2& \ldots& v^k_2 \\
%  \vdots&\vdots&\vdots&\vdots\\
% v^1_{r_1}&v^2_{r_2}& \ldots & v^k_{r_k}
% \end{array}
%$$
%where the first row $v^1_1, v^2_1,\ldots, v^k_1$ are eigenvectors that corresponds to fixed points on of $A$ on $\PP^{n-1}$ and the column vectors are the generalized eigenvectors corresponds to $\lambda_1,\ldots, \lambda_k$. With this basis, $A$ can be expressed by the Jordan normal form.

We now recall some elementary linear algebra. Let $M\in M_n$ and $\lambda$ be an eigenvalue of $M$. For some $r\in \mathbb N$ a vector $v\in\mathbb C^n$ is called a generalized eigenvector of rank $r$ of $M$ associated to the eigenvalue $\lambda$ if $(M-\lambda I)^rv=0$ but $(M-\lambda I)^{r-1}v\neq 0$.
It turns out that both the absolute values of the eigenvalues and the existence of generalized eigenvectors of higher rank give information about the fixed-point set of the linear self map on a generalized ball represented by $M$.

As before, for a vector $v\in\mathbb C^{p,q}$, we will denote by $[v]\in\mathbb P^{p+q-1}$ its projectivization. Similarly, for any complex vector subspace $W\subset\mathbb C^{p,q}$, we denote its projectivization by $[W]$.

\begin{prop}\label{fixed point}
Let $A\in \text{Aut}(D_{p,q})$ and choose a matrix representation in $U(p,q)$ and denote it also by $A$. Let $\lambda$ be an eigenvalue of $A$ and $v$ be an associated generalized eigenvector of rank $r$.
\begin{enumerate}\label{projective subspace on the boundary}
\item If $|\lambda|\neq 1$, then $[(A-\lambda I)^{r-1}v]$ is a fixed point of $A$ on $\partial D_{p,q}$. Furthermore, if $r\geq 2$, then there is an $(r-1)$-dimensional projective linear subspace $[W]$ in $\partial D_{p,q}$
invariant under $A$ and $[(A-\lambda I)^{r-1}v]\in [W]$ is a unique fixed point of $A$ in $[W]$.
\item If $|\lambda|=1$ and $r\geq 2$,
then $[(A-\lambda I)^{r-1}v]$ is a fixed point of $A$ on $\partial D_{p,q}$.
Furthermore, there is an $\left(\left[ \frac{r}{2} \right]-1\right) $-dimensional projective subspace
$[W]$ in $\partial D_{p,q}$ invariant under $A$ and $[(A-\lambda I)^{r-1}v]$ is a unique fixed point of $A$ in $[W]$.
\end{enumerate}
\end{prop}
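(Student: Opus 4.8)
The plan is to analyze the restriction of $A$ to the generalized eigenspace for $\lambda$ and exploit the expansion/isometry inequality together with Lemma~\ref{ortho lemma}. Write $N = A - \lambda I$ and let $v$ be a generalized eigenvector of rank $r$, so $N^r v = 0$ but $w := N^{r-1}v \neq 0$. Then $Nw = 0$, i.e. $Aw = \lambda w$, so $[w]$ is indeed a fixed point of $A$ in $\mathbb P^{p+q-1}$; the content is to locate it on $\partial D_{p,q}$ and to build the invariant projective subspace. For the first claim in (1), since $w$ is an eigenvector for $\lambda$ with $|\lambda|\neq 1$, Lemma~\ref{ortho lemma} applied with $\lambda_1=\lambda_2=\lambda$ (noting $\overline\lambda\lambda = |\lambda|^2 \neq 1$) forces $w^H H_{p,q} w = 0$, so $[w]\in\partial D_{p,q}$. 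For (2) with $|\lambda|=1$, the same observation gives no information, so here I would instead use the chain structure: consider the vectors $w_j := N^{r-1-j}v$ for $j=0,\dots,r-1$ (so $w_0 = w$, and $A w_j = \lambda w_j + w_{j-1}$ in a suitable indexing). One computes, using $A^H H_{p,q} A = H_{p,q}$ and $|\lambda|^2=1$, a recursion relating the Hermitian pairings $w_i^H H_{p,q} w_j$; pushing this recursion (the vanishing $N^r v = 0$ kills the top term) should force $w^H H_{p,q} w = 0$ whenever $r \geq 2$, placing $[w]$ on $\partial D_{p,q}$.

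For the invariant projective subspace: let $G_\lambda$ denote the full generalized eigenspace of $\lambda$ and let $W$ be an appropriate $A$-invariant subspace built from the Jordan chain(s) through $w$. In case (1), $N$ maps the length-$r$ chain $\mathrm{Span}\{v, Nv, \dots, N^{r-1}v\}$ into itself, and one checks that this span is $H_{p,q}$-isotropic: indeed for any $a = N^i v$, $b = N^j v$ with $i + j \leq r-1$… actually more carefully, $(N^i v)^H H_{p,q}(N^j v)$ can be shown to vanish when $i+j < r$ and the relation $A^H H_{p,q} A = H_{p,q}$ lets one slide powers of $N$ across the form; combined with $N^r v = 0$ this yields total isotropy of the whole chain. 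An isotropic subspace of $\mathbb C^{p,q}$ has dimension at most $q$, and its projectivization lies in $\partial D_{p,q}$; it is $A$-invariant since $A = \lambda I + N$ preserves it. This gives the $(r-1)$-dimensional projective subspace $[W]$. Uniqueness of the fixed point of $A$ inside $[W]$: a fixed point corresponds to an eigenvector of $A|_W$; but $A|_W$ has only the eigenvalue $\lambda$ and (being a single Jordan block, or at worst with a one-dimensional $\lambda$-eigenspace inside $W$) has a unique eigenline, namely $[w]$. In case (2), the length of the isotropic chain one can extract is only $\lceil r/2\rceil$ — roughly, the pairing recursion shows $N^i v$ and $N^j v$ are orthogonal only when $i + j \geq r$, so the isotropic subspace is $\mathrm{Span}\{N^{r-1}v, N^{r-2}v, \dots, N^{r-\lceil r/2\rceil}v\}$ of dimension $\lceil r/2\rceil$, giving a projective subspace of dimension $\lceil r/2\rceil - 1 = [r/2] - 1$ (using $r\geq 2$); $A$-invariance and uniqueness of the fixed point follow as before.

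The main obstacle I expect is the bookkeeping for the Hermitian pairings $w_i^H H_{p,q} w_j$ along a Jordan chain when $|\lambda| = 1$: establishing precisely the threshold ($i + j \geq r$ forces orthogonality, and strictly below that one cannot conclude) and extracting exactly the isotropic block of dimension $\lceil r/2\rceil$ rather than something off by one. This is where the asymmetry between cases (1) and (2) in the statement comes from, and it requires care with the expansion of $(\lambda I + N)^H H_{p,q}(\lambda I + N) = H_{p,q}$ into its components in powers of $N$ and $N^H$. Everything else — the boundary membership of the top vector, $A$-invariance of the constructed subspace, the dimension bound from isotropy in $\mathbb C^{p,q}$, and uniqueness of the interior-to-$[W]$ fixed point via the single eigenvalue of $A|_W$ — is routine once the pairing recursion is in hand. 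I would also double-check the edge cases $r=2$ in part (2), where $[r/2]-1 = 0$, meaning $[W]$ is a single point, consistent with $[w]$ itself being the fixed point.
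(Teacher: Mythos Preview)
Your approach is essentially identical to the paper's: set up the Jordan chain $v_1,\dots,v_r$ (your $N^{r-1}v,\dots,v$), use the isometry identity $A^H H_{p,q}A=H_{p,q}$ to run an induction on the pairings $v_i^H H_{p,q} v_j$, and take $W$ to be the span of an initial segment of the chain, which is isotropic and hence projects into $\partial D_{p,q}$, is $A$-invariant, and contains the unique eigenline $[v_1]$.

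The one genuine slip is exactly the off-by-one you anticipated in case (2). Your orthogonality threshold ``$i+j\geq r$'' (in powers of $N$) is correct, but you then take the span $\{N^{r-1}v,\dots,N^{r-\lceil r/2\rceil}v\}$ and assert $\lceil r/2\rceil - 1 = [r/2]-1$. For odd $r$ this is false: with $r=3$ your span includes $Nv$, and the pair $i=j=1$ gives $i+j=2<3$, so $(Nv)^H H_{p,q}(Nv)$ need not vanish (the identity $v_2^H H_{p,q} v_2 = (v_1+v_2)^H H_{p,q}(v_1+v_2)$ reduces to a tautology once $v_1\perp v_1,v_2$). The correct count is $[r/2]$ vectors, namely $N^{r-1}v,\dots,N^{r-[r/2]}v$: the minimal power is then $r-[r/2]=\lceil r/2\rceil$, and any pair of powers sums to at least $2\lceil r/2\rceil\geq r$. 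This gives a projective subspace of dimension $[r/2]-1$, matching the statement. Everything else in your outline is correct and tracks the paper's argument.
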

\begin{proof}
1. Let $A\in U(p,q)$, $\lambda$ be an eigenvalue of $A$ and $v$ be an associated generalized eigenvector of rank $r$. Let $v_r:=v$ and define inductively $v_{j-1}:=(A-\lambda I)v_j$ for $j\in\{r,r-1,\ldots, 2\}$. In particular $v_1$ is an eigenvector of $A$ associated to $\lambda$.
\begin{eqnarray*}
Av_1&=&\lambda v_1, \\
Av_2 &=& v_1 + \lambda v_2,\\
&&\vdots\\
A v_r &=&v_{r-1} + \lambda v_r.
\end{eqnarray*}
Suppose that $|\lambda|\neq 1$.
We claim that
\begin{equation}
v_i^HH_{p,q} v_j=0 \text{ for all } 1\leq i,j\leq r.
\end{equation}
We will prove it using induction.
Because of $A^H H_{p,q} A = H_{p,q}$, we have
$v_1^H H_{p,q}v_1 =0$
and this implies that $[v_1]\in \partial D_{p,q}$.
Suppose that  $r\geq 2$ and $v_1^H H_{p,q}v_{j'}=0$ for every $j'<j\leq r$. Then
\begin{eqnarray*}
v_1^H H_{p,q}v_{j} = (Av_1)^H H_{p,q}(Av_j)
= \ov\lambda v_1^H H_{p,q}(v_{j-1}+\lambda v_j)
=|\lambda|^2 v_1^HH_{p,q}v_j.
\end{eqnarray*}
Hence $v_1^HH_{p,q}v_j=0$ and as a consequence, 
\begin{equation}
v_1^HH_{p,q}v_j=0 \quad\text{ and }\quad v_j^HH_{p,q}v_1=0
\quad\text{ for all } j \,\text{ with } \,1\leq j\leq r.
\end{equation}
Now fix $i\geq2$ and $j\geq 2$.
For the induction, assume that $v_{i'}^HH_{p,q} v_{j'}=0$ for all $i'<i$ or $j'<j$.
Since
\begin{eqnarray*}
v_i^HH_{p,q} v_j &=& (v_{i-1}+\lambda v_i)^H H_{p,q} (v_{j-1}+\lambda v_j)\\
&=& v_{i-1}^H H_{p,q}v_{j-1} + \lambda v_{i-1}^HH_{p,q} v_j
+ \ov\lambda v_i^HH_{p,q} v_{j-1} + |\lambda|^2 v_i^HH_{p,q} v_j,
\end{eqnarray*}
we can obtain $v_i^HH_{p,q} v_j=0$ and we obtain the claim.
Define the subspace $W$ in $\CC^{p,q}$ spanned by $v_1,\ldots, v_r$.
Then $[W]$ is a projective subspace in $\mathbb P^{p+q-1}$ contained in $\partial D_{p,q}$ which is invariant with respect to the action of $A$ and it has a unique fixed point $[v_1]$ since $v_1$ is the unique eigenvector in $W$ (up to scalar multiplication).

2. Consider the case $|\lambda|=1$.
By replacing $A$ with $\frac{1}{\lambda}A$,
we may assume that $\lambda=1$ without any loss of generality.
For $j$ with $1\leq j\leq r-1$ we have
$$v_1^H H_{p,q}v_{j+1} = (Av_1)^H H_{p,q} (Av_{j+1}) = v_1^H H_{p,q} (v_{j+1}+v_j).$$
This implies that
\begin{equation}\label{1j}
v_1^HH_{p,q}v_j=0 \,\text{ and }\, v_j^HH_{p,q}v_1=0 \,\text{ for }\, j\, \text{ with }\, 1\leq j\leq r-1.
\end{equation}
Since for $m$ with $2\leq m\leq r-1$ we have
$$
v_2^H H_{p,q}v_m = (Av_2)^H H_{p,q}(Av_m) = (v_1+v_2)^HH_{p,q}(v_{m-1}+v_m) = v_2^HH_{p,q}(v_{m-1}+v_m)
$$
by \eqref{1j}, one obtains
\begin{equation}
v_2^HH_{p,q}v_m =0 \,\,\text{ and }\,\, v_m^HH_{p,q}v_2 =0 \,\,\text{ for all }\,\,
m \,\,\text{ with }\,\,1\leq m\leq r- 2.
\end{equation}
By repeating this process, we obtain that for a fixed $j$ with $1\leq j\leq r-1$,
\begin{equation}
v_m^H H_{p,q} v_j=0 \,\,\text{ for all }\,\, m \,\,\text{ with } \,\,1\leq m\leq r-j.
\end{equation}
As a result
\begin{equation}
v_m^H H_{p,q}v_j=0 \,\,\text{ for all }\,\, m,\,j \,\,\text{ with } \,\,1\leq m,j\leq \left[ \frac{r}{2} \right].
\end{equation}
Define the complex vector subspace $W$ of $\CC^{p,q}$ spanned by $v_1,\ldots, v_{\left[ \frac{r}{2} \right]}$.
Then $[W]$ is a projective subspace in $\mathbb P^{p+q-1}$ contained in $\partial D_{p,q}$ which is invariant with respect to the action of $A$ and it contains a unique fixed point $[v_1]$.
\end{proof}

%\begin{remark}
%By Theorem \ref{projective subspace on the boundary},
%if $A$ has a fixed point in $D_{p,q}$, then the associated eigenvalue must be unimodular and its geometric multiplicity must be equal to its algebraic multiplicity.
%\end{remark}

\begin{cor}
Let $A\in Aut(D_{p,q})$ and choose a matrix representation in $U(p,q)$ and denote it also by $A$. Then, in any Jordan canonical form of $A$, every higher-rank Jordan block or rank-one Jordan block with a non-unimodular eigenvalue corresponds to a fixed point on $\partial D_{p,q}$.
\end{cor}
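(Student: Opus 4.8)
The plan is to deduce the corollary directly from Proposition~\ref{fixed point}. First I would recall the dictionary between Jordan blocks and generalized eigenvectors: a Jordan block of size $r$ for an eigenvalue $\lambda$ of $A$ amounts to a Jordan chain $v_1,\dots,v_r\in\mathbb C^{p,q}$ with $Av_1=\lambda v_1$, $(A-\lambda I)v_j=v_{j-1}$ for $2\le j\le r$, and $v_r$ a generalized eigenvector of rank exactly $r$ associated to $\lambda$. Since $A\in U(p,q)$ is invertible we have $\lambda\neq 0$, so the bottom vector $v_1=(A-\lambda I)^{r-1}v_r$ is a genuine eigenvector and $[v_1]\in\mathbb P^{p+q-1}$ is the candidate fixed point of $A$ to be attached to the block.

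Next I would split along the cases in the statement. If the block is higher-rank, i.e. $r\ge 2$, then Proposition~\ref{fixed point} applies with this $v_r$: part~(1) when $|\lambda|\neq 1$ and part~(2) when $|\lambda|=1$ each assert that $[(A-\lambda I)^{r-1}v_r]=[v_1]$ is a fixed point of $A$ lying on $\partial D_{p,q}$. If instead the block is rank-one, $r=1$, with $|\lambda|\neq 1$, then the first assertion of Proposition~\ref{fixed point}(1) with $r=1$ gives that $[(A-\lambda I)^{0}v_1]=[v_1]$ is a fixed point on $\partial D_{p,q}$; equivalently, from $A^HH_{p,q}A=H_{p,q}$ one computes $v_1^HH_{p,q}v_1=|\lambda|^2\,v_1^HH_{p,q}v_1$, which forces $v_1^HH_{p,q}v_1=0$, i.e. $[v_1]\in\partial D_{p,q}$. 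In either situation the block is matched to the boundary fixed point $[v_1]$, which is exactly the assertion of the corollary.

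This is bookkeeping rather than a substantial argument, so I do not anticipate a real obstacle; the one point worth recording is why the remaining case --- a rank-one block with a unimodular eigenvalue --- is omitted. For such a block the eigenvector $v_1$ may have $v_1^HH_{p,q}v_1>0$, $=0$, or $<0$, so $[v_1]$ can be an interior fixed point, a boundary fixed point, or not a point of $\overline{D_{p,q}}$ at all, and no uniform conclusion is possible. To be precise about the phrase ``corresponds to'', one may also note that distinct Jordan blocks can yield the same point $[v_1]$, so the assignment block $\mapsto[v_1]$ is not claimed to be injective.
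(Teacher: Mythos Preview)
Your proposal is correct and matches the paper's intent: the corollary is stated without proof there, as an immediate consequence of Proposition~\ref{fixed point}, and your argument is precisely that deduction, handling the higher-rank case via parts~(1) and~(2) and the rank-one non-unimodular case via part~(1) (or the direct isometry computation).
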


\noindent\textbf{Remark.} There exist indeed non-diagonalizable elements in $U(p,q)$. We refer the reader to Example~\ref{nondiagonalizable}.
\newline

Through the generalized Cayley transform one can map the complex unit balls $D_{1,q}$ biholomorphically onto some Siegel domains of the second kind. Thus, the dilations on the Siegel domains give elements in $\textrm{Aut}(D_{1,q})$ which do not have fixed points in $D_{1,q}$ and hence there exist fixed points on the boundary by Brouwer's fixed point theorem.  On the other hand, Hayden-Suffridge~\cite{hs} showed that if an automorphism of a complex unit ball has more than two fixed points on the boundary then it must have fixed points in the interior (in fact, there is at least an affine line on which every point is fixed). For $p,q\geq 2$, the generalized balls $D_{p,q}$ contain projective linear subspaces in their boundaries and this leads to a lot of differences between the complex unit balls and other generalized balls when studying their holomorphic mappings. Example~\ref{hs example} in Section~\ref{examples} will show that the result of Hayden-Suffridge cannot be generalized to $D_{p,q}$ by simply increasing the number of fixed points on the boundary in relation to the existence of these projective subspaces in the boundary. Before getting a suitable generalization of Hayden-Suffridge (in Corollary~\ref{hs generalization}), we observe the following general behavior relating the fixed points and the projective lines on which every point is fixed.

\begin{thm} \label{fixed point number thm}
Let $A\in \text{Aut}(D_{p,q})$ and choose a matrix representation in $U(p,q)$ and denote it also by $A$.
\begin{enumerate}
\item
If $A$ has at least $p+1$ fixed points in $D_{p,q}$, then
there exists a projective line intersecting $D_{p,q}$ on which every point is fixed by $A$. In particular, if an element in $\textrm{Aut}(D_{p,q})$ has only a finite number of fixed points in $D_{p,q}$, then it can have at most $p$ fixed points in $D_{p,q}$.
\item
If A has at least $2p+1$ fixed points in $\partial D_{p,q}$, then there exists a projective line on which every point is fixed by $A$.
\end{enumerate}
\end{thm}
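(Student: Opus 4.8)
The plan is to reduce everything to linear algebra over $\CC^{p,q}$ and then run a dichotomy. Recall that the fixed points of $A$ lying in $D_{p,q}$ (resp. on $\partial D_{p,q}$) correspond exactly to the projectivized eigenvectors $[v]$ of $A$ with $v^H H_{p,q} v > 0$ (resp. $v^H H_{p,q} v = 0$), the eigenvalue being automatically nonzero since $A$ is invertible. The mechanism producing a line of fixed points is simple: \emph{if two of the given fixed points $[v]\neq[v']$ share a common eigenvalue $\lambda$, then, since $v,v'$ are then linearly independent, $\mathrm{Span}(v,v')$ lies in the $\lambda$-eigenspace and $[\mathrm{Span}(v,v')]$ is a projective line on which every point is fixed by $A$}; in case (1) this line passes through $[v]\in D_{p,q}$ and hence meets $D_{p,q}$. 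So in both parts it suffices to show that the stated number of fixed points forces two of the associated eigenvalues to coincide, i.e. to rule out the case that all of them are distinct.

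For (1), suppose $A$ has fixed points $[v_1],\dots,[v_{p+1}]$ in $D_{p,q}$ with pairwise distinct eigenvalues $\lambda_1,\dots,\lambda_{p+1}$. From $v_i^H H_{p,q} v_i = v_i^H A^H H_{p,q} A v_i = |\lambda_i|^2\, v_i^H H_{p,q} v_i$ and $v_i^H H_{p,q} v_i>0$ we get $|\lambda_i|=1$ for all $i$ (equivalently, apply Lemma~\ref{ortho lemma} with $v_1=v_2=v_i$). Then for $i\neq j$ we have $\ov{\lambda_j}\lambda_i=\lambda_i/\lambda_j\neq 1$, so Lemma~\ref{ortho lemma} yields $v_i^H H_{p,q} v_j=0$. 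Hence the Gram matrix of $v_1,\dots,v_{p+1}$ with respect to $H_{p,q}$ is diagonal with positive entries, so these vectors are linearly independent and span a $(p+1)$-dimensional positive definite subspace of $\CC^{p,q}$ — impossible, since $H_{p,q}$ has signature $(p,q)$ and admits no positive definite subspace of dimension exceeding $p$. The ``in particular'' assertion is then immediate, as a projective line meeting the open set $D_{p,q}$ meets it in infinitely many points.

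For (2), suppose $A$ has fixed points $[w_1],\dots,[w_{2p+1}]$ on $\partial D_{p,q}$, so each $w_k^H H_{p,q} w_k=0$, with pairwise distinct (nonzero) eigenvalues $\mu_1,\dots,\mu_{2p+1}$. Form the graph $G$ on $\{1,\dots,2p+1\}$ joining $i\neq j$ whenever $\mu_i\ov{\mu_j}=1$ (a symmetric condition). If $\mu_i\ov{\mu_j}=\mu_i\ov{\mu_k}=1$ with $i$ fixed then $\ov{\mu_j}=\ov{\mu_k}$, forcing $j=k$; thus $G$ has maximum degree at most $1$, i.e. $G$ consists of $k$ disjoint edges and $2p+1-2k$ isolated vertices with $0\le k\le p$, hence has at least $(2p+1)-k\ge p+1$ connected components. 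Picking one index from each component gives $m\ge p+1$ vectors $w_{i_1},\dots,w_{i_m}$ with $\mu_{i_a}\ov{\mu_{i_b}}\neq 1$ for $a\neq b$, so by Lemma~\ref{ortho lemma} they are pairwise $H_{p,q}$-orthogonal; since they are also null, they span a totally isotropic subspace $W\subseteq\CC^{p,q}$, and being eigenvectors for distinct eigenvalues they are linearly independent, so $\dim W=m\ge p+1$. But projection along a maximal negative definite subspace onto a maximal positive definite one is injective on any totally isotropic $W$, so $\dim W\le p$ — a contradiction. Hence two of the $\mu_k$ agree, and we conclude as above.

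I expect the only real care to be needed in the combinatorics of part (2): verifying that the ``non-orthogonality graph'' is a matching and that removing one vertex per component from a graph on $2p+1$ vertices carrying a matching of size $\le p$ still leaves $\ge p+1$ pairwise non-adjacent vertices. Everything else is routine bookkeeping with $A^H H_{p,q} A = H_{p,q}$ and Lemma~\ref{ortho lemma}; the one conceptual point worth isolating is that interior fixed points force unimodular eigenvalues, which is exactly what makes the ``equal eigenvalue versus orthogonal'' alternative of Lemma~\ref{ortho lemma} rigid enough to drive both arguments.
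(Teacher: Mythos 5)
Your proof is correct and follows essentially the same route as the paper's: reduce to the case of pairwise distinct eigenvalues (since a shared eigenvalue immediately yields the fixed projective line), use Lemma~\ref{ortho lemma} together with the unimodularity of eigenvalues at interior fixed points to force pairwise $H_{p,q}$-orthogonality, and derive a contradiction from a positive definite (resp.\ totally isotropic) subspace of dimension $p+1$. The only difference is cosmetic: in part (2) the paper extracts the $p+1$ pairwise orthogonal null eigenvectors by a greedy selection, noting that each eigenvalue $\lambda$ has at most one ``partner'' $(\overline{\lambda})^{-1}$ among the remaining points, whereas you package the identical count as a matching bound in the non-orthogonality graph.
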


\begin{proof}

(1) Suppose that $A$ has $p+1$ fixed points $\{[v_1],\ldots,[v_{p+1}]\}$ in $D_{p,q}$ associated to $p+1$ distinct eigenvalues $\{\lambda_1,\ldots,\lambda_{p+1}\}$.
By Lemma~\ref{ortho lemma}, we have either $\overline{\lambda_i}\lambda_j=1$ or
$v_i^HH_{p,q}v_j=0$.
However, we must have $|\lambda_j|=1$ for all $j$ by (1) in Proposition \ref{fixed point}, and all $\lambda_j$ are distinct, we see that for $i\neq j$, $\overline{\lambda_i}\lambda_j=1$ is impossible. This implies that
$v_i^HH_{p,q}v_j=0$ whenever $i\neq j$. Then $\{v_1,\ldots, v_{p+1}\}$ span a positive definite $(p+1)$-dimensional subspace in $\mathbb C^{p,q}$, which is a contradiction.  Thus, at least two elements in $\{v_1,\ldots, v_{p+1}\}$ are associated to the same eigenvalue and they span a 2-dimensional eigenspace and this gives a projective line on which every point is fixed by $A$.

(2) Suppose that $A$ has $2p+1$ fixed points in $\partial D_{p,q}$ associated to $2p+1$ distinct eigenvalues. Pick any one fixed point and denote it by $[v_1]$, with the associated eigenvalue denoted by $\lambda_1$. In the remaining $2p$ fixed points, there is at most one of them is associated to the eigenvalue $(\overline{\lambda_1})^{-1}$. Thus, there are at least $2p-1$ of them whose corresponding projectivized eigenvectors are orthogonal to $[v_1]$ by Lemma~\ref{ortho lemma}. Pick any one such fixed point and denote it by $[v_2]$, with the associated eigenvalue denoted by $\lambda_2$. By repeating this procedure we see that we can choose $p+1$ fixed points whose corresponding projectivized eigenvectors are pairwise orthogonal and thus we get a $(p+1)$-dimensional isotropic subspace in $\mathbb C^{p,q}$, which is a contradiction. Thus, at least two fixed points are associated to the same eigenvalue and we again get a projective line on which every point is fixed by $A$, as in $(1)$.
\end{proof}

\noindent\textbf{Remark.} The numbers $p+1$ and $2p+1$ in (1) and (2) are sharp. It is illustrated by  Example~\ref{sharp example 3} and Example~\ref{sharp example 2} in Section~\ref{examples}.

\begin{cor}\label{at most}
Let $A\in \text{Aut}(D_{p,q})$ and
suppose that there does not exist any projective line on which every point is fixed by $A$. Then $A$ has at most $p$, $q$ and $\min\{2p,2q\}$  fixed points in $D_{p,q}$, $\mathbb P^{p+q-1}\setminus \overline{ D_{p,q}}$ and $\partial D_{p,q}$ respectively.
\end{cor}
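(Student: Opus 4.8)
The plan is to deduce all three bounds from Theorem~\ref{fixed point number thm}: the bounds $p$ and $2p$ follow from it directly by contraposition, while the bounds $q$, $2q$ (hence $\min\{2p,2q\}$) follow by applying it to the ``dual'' domain $D_{q,p}$, which is essentially the complement of $\overline{D_{p,q}}$.

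For the first part, I would simply note that the hypothesis rules out in particular any projective line meeting $D_{p,q}$ on which every point is fixed; by the contrapositive of Theorem~\ref{fixed point number thm}(1) this gives at most $p$ fixed points in $D_{p,q}$, and by the contrapositive of Theorem~\ref{fixed point number thm}(2) at most $2p$ fixed points on $\partial D_{p,q}$.

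For the remaining bounds I would use the elementary observation that $\mathbb P^{p+q-1}\setminus\overline{D_{p,q}}$ is, after a linear change of homogeneous coordinates, the generalized ball $D_{q,p}$. Concretely, let $\sigma\in GL(p+q;\mathbb C)$ interchange the first $p$ and the last $q$ coordinates; then $\sigma H_{p,q}\sigma^{-1}=-H_{q,p}$, so $z^HH_{p,q}z=-(\sigma z)^HH_{q,p}(\sigma z)$ for all $z$. Hence the induced automorphism $[\sigma]$ of $\mathbb P^{p+q-1}$ maps $\mathbb P^{p+q-1}\setminus\overline{D_{p,q}}$ biholomorphically onto $D_{q,p}$ and maps $\partial D_{p,q}$ onto $\partial D_{q,p}$, while $B:=\sigma A\sigma^{-1}$ lies in $U(q,p)$ and defines an automorphism of $D_{q,p}$. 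Since $[\sigma]$ conjugates the projective linear map $A$ to $B$, the two have identical fixed-point configurations on $\mathbb P^{p+q-1}$; in particular no projective line is fixed pointwise by $B$ either. Applying the first part to $B$ and $D_{q,p}$, i.e. Theorem~\ref{fixed point number thm} with the roles of $p$ and $q$ exchanged, then yields at most $q$ fixed points of $B$ in $D_{q,p}$, equivalently at most $q$ fixed points of $A$ in $\mathbb P^{p+q-1}\setminus\overline{D_{p,q}}$, and at most $2q$ fixed points of $B$ on $\partial D_{q,p}$, equivalently of $A$ on $\partial D_{p,q}$. Combining the bounds $2p$ and $2q$ gives the bound $\min\{2p,2q\}$ on the boundary, completing the argument.

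I do not expect a genuine obstacle. The only thing requiring care is the routine signature bookkeeping in the previous paragraph: verifying that $[\sigma]$ really does carry the complement of $\overline{D_{p,q}}$ onto $D_{q,p}$ and $\partial D_{p,q}$ onto $\partial D_{q,p}$, and that conjugation by $\sigma$ maps $U(p,q)$ into $U(q,p)$ without disturbing the Jordan structure of $A$, so that the hypothesis on fixed projective lines transfers verbatim.
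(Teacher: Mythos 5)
Your proof is correct and is essentially the intended derivation: the paper states Corollary~\ref{at most} without proof as an immediate consequence of Theorem~\ref{fixed point number thm}, namely contraposition for the bounds $p$ and $2p$, and the standard duality identifying $\mathbb P^{p+q-1}\setminus\overline{D_{p,q}}$ with $D_{q,p}$ (with $U(p,q)$ conjugated into $U(q,p)$) for the bounds $q$ and $2q$. Your signature bookkeeping for $\sigma$ checks out, so nothing further is needed.
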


We can now give the following generalization of Hayden-Suffridge's result for the generalized balls. (This also gives an alternative proof for their result for the unit balls of finite dimension.)

\begin{cor}\label{hs generalization}
Let $A\in\textrm{Aut}(D_{p,q})$ be an automorphism such that there does not exist any projective line in $\partial D_{p,q}$ on which every point is fixed by $A$. If $A$ has at least $2p+1$ fixed points on $\partial D_{p,q}$, then $A$ must have fixed points in $D_{p,q}$.
\end{cor}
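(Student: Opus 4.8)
The plan is to deduce the statement from Theorem~\ref{fixed point number thm}(2) together with a short argument about the $2$-plane that its proof produces. Suppose $A$ has at least $2p+1$ fixed points on $\partial D_{p,q}$, say $[v_1],\dots,[v_{2p+1}]$, where each $v_j$ is an eigenvector, $Av_j=\lambda_j v_j$ with $\lambda_j\neq 0$, and $v_j^H H_{p,q}v_j=0$. First I would recall the counting already carried out inside the proof of Theorem~\ref{fixed point number thm}(2): the eigenvalues $\lambda_1,\dots,\lambda_{2p+1}$ cannot all be distinct. Indeed, if they were, then by Lemma~\ref{ortho lemma} the relation ``$v_i$ is not $H_{p,q}$-orthogonal to $v_j$'' would force $\ov{\lambda_i}\lambda_j=1$, which determines $\lambda_j$ from $\lambda_i$, so this relation is a matching on $\{1,\dots,2p+1\}$; one could then select an independent set of size at least $p+1$, and the corresponding pairwise-orthogonal null eigenvectors, being associated to distinct eigenvalues and hence linearly independent, would span a totally isotropic subspace of dimension $p+1>\min\{p,q\}$ in $\mathbb C^{p,q}$, which is impossible. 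Hence two of the fixed points, say $[v_1]$ and $[v_2]$, are associated to a common eigenvalue $\mu\neq 0$.

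The second step is the key use of the hypothesis. Put $V:=\textrm{Span}\{v_1,v_2\}$. Since $v_1,v_2$ lie in the $\mu$-eigenspace of $A$, every nonzero vector of $V$ is again a $\mu$-eigenvector, so $[V]$ is a projective line every point of which is fixed by $A$. Now $v_1^HH_{p,q}v_1=v_2^HH_{p,q}v_2=0$, and we distinguish two cases according to whether $v_1^HH_{p,q}v_2$ vanishes. If $v_1^HH_{p,q}v_2=0$, then $H_{p,q}$ vanishes identically on $V$, so $[V]\subset\partial D_{p,q}$ is a projective line inside the boundary on which $A$ fixes every point, contradicting the hypothesis. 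Hence $v_1^HH_{p,q}v_2\neq 0$, and then $H_{p,q}|_V$ is represented, in the basis $\{v_1,v_2\}$, by a Hermitian matrix with vanishing diagonal and nonzero off-diagonal entries, hence of negative determinant and therefore of signature $(1,1)$; in particular it takes strictly positive values on $V$. Choosing $v\in V$ with $v^HH_{p,q}v>0$, the point $[v]$ lies in $D_{p,q}$ and is a fixed point of $A$ (in fact all of $[V]\cap D_{p,q}$ consists of fixed points of $A$). This proves the corollary.

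I do not expect a genuine obstacle here: the only slightly delicate point is the pigeonhole/matching estimate showing that the $2p+1$ boundary eigenvalues cannot all be distinct, and that is exactly the computation already performed inside the proof of Theorem~\ref{fixed point number thm}(2); in the write-up I would simply invoke it to extract two boundary fixed points with a common eigenvalue and then spend a few lines on the ``isotropic plane versus hyperbolic plane'' alternative, which is where the assumption that no projective line of $\partial D_{p,q}$ is fixed pointwise by $A$ enters.
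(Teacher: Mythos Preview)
Your proposal is correct and follows essentially the same route as the paper: invoke the counting argument from Theorem~\ref{fixed point number thm}(2) to obtain two boundary eigenvectors $v_1,v_2$ with a common eigenvalue, then observe that the $2$-plane $V=\mathrm{Span}\{v_1,v_2\}$ is either isotropic (excluded by hypothesis) or has signature $(1,1)$ and hence meets $D_{p,q}$. The paper's write-up simply cites Theorem~\ref{fixed point number thm}(2) for the existence of the fixed projective line rather than recapitulating the pigeonhole step, but the substance is identical.
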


\begin{proof}%[Proof of Corollary~\ref{hs generalization}]
By (2) in Theorem~\ref{fixed point number thm}, we get a projective line $L$ on which every point is fixed by $A$. Furthermore, from its proof we know that $L$ intersects $\partial D_{p,q}$ at at least two points. This means we have two linearly independent null vectors in the 2-dimensional subspace of $\mathbb C^{p,q}$ whose projectivization is $L$. Thus, this 2-dimensional subspace is either isotropic or contains positive vectors and this in turn means that either $L$ is completely contained in $\partial D_{p,q}$ or it intersects $D_{p,q}$. The desired result now follows.
\end{proof}

\subsection{Normal forms of automorphisms on $D_{p,q}$}

In this section we will find a normal form elements in $U(p,q)$, which are matrices representing the automorphisms of $D_{p,q}$. It is a generalization of a result given in
\cite[Proposition 5]{CM} for the unit ball case.

For a point $v\in \mathbb C^{p,q}$, write $v=(v', v'')$ with
$v'\in \mathbb C^p$ and $v''\in \mathbb C^q$. Also write $\|v\|^2_{p,q}:=v^HH_{p,q}v=|v'|^2_p-|v''|^2_q$, where $|\cdot|^2_p$ and $|\cdot|^2_q$ denote the Euclidean norms.
If we naturally identify $U(p)\times U(q)$ as a subgroup of $U(p,q)$ (as block diagonal matrices), then there exists $U \in U(p)\times U(q)$ such that
$Uv = (|v'|_p,0,\ldots, 0, |v''|_q)$.

Now suppose $v$ is a unit vector, i.e. $\|v\|_{p,q}^2 =1$. Define
$$M = \left( \begin{array}{ccccc}
|v'|_p & &&&-|v''|_q \\
 &1 &&& \\
& & \ddots&&\\
 & & &1 & \\
|v''|_q & & &&-|v'|_p \\
\end{array}\right),
$$
where other entries are zero. Then, $M\in U(p,q)$, $M^{-1}=M$ and we have $MUv=(1,0,\ldots, 0)^t$.
Hence, we also have $v = U^{-1}M(1,0,\ldots, 0)^t$.

Now let $A\in U(p,q)$ and let $v\in\mathbb C^{p,q}$ be the vector such that $Av = (1,0,\ldots, 0)^t$. In particular, $\|v\|^2_{p,q}=1$.
Then from above we see that there exist $V_1\in U(p)\times U(q)$ and $M_1\in U(p,q)$ such that $AV_1 M_1(1,0,\ldots, 0)^t = (1,0,\ldots, 0)^t$.
This implies that $AV_1M_1$ belongs to the isotropy subgroup at $(1,0,\ldots,0)^t$, which is $\{1\}\times U(p-1,q)$ (as block diagonal matrices in $U(p,q)$).
%Without loss of generality we may assume that $AM_1U_1\in \text{id}\times U(p-1,q)$
Since
$\{1\}\times U(p-1,q)$ preserves the subspace $\{v\in\mathbb C^{p,q}: v=(0,*,\ldots,*)\}$, which
is isometric to $\mathbb C^{p-1,q}$, we can repeat the argument for $p\geq 2$ and find
$V_2\in \{1\}\times U(p-1)\times U(q)\subset U(p)\times U(q)$ and $M_2\in\{1\}\times U(p-1,q)\subset U(p,q)$ 
such that $AV_1M_1V_2M_2$ fixes both $(1,0,\ldots,0)^t$ and $(0,1,0,\ldots,0)^t$, where $M_2$ is of the following form for some $a,b\geq 0$ such that $a^2-b^2=1$,
$$\left( \begin{array}{c|ccccccc}
 1& &&&&  &\\\hline
 & a&&&&  &-b\\
 & &1 &&& & \\
& && \ddots& && \\
 & & &&1& & \\
 &b && &&&-a \\
\end{array}\right).
$$

Hence, we see that $AV_1M_1V_2M_2\in \{I_2\}\times U(p-2,q)$.
By repeating this process, we can find $V_j\in U(p)\times U(q)$, $M_j\in U(p,q)$, $1\leq j\leq p$, so that $A V_1M_1\cdots V_pM_{p}\in \{I_p\}\times U(q)$.
We call the automorphisms associated to the matrices of the form as $M_j$, $1\leq j\leq p$, the {\it non-isotropic dilations} of $D_{p,q}$.
In conclusion, we have obtained the following:
\begin{thm}\label{normal form}
The subgroup $U(p)\times U(q)$ and the non-isotropic dilations generate the full automorphism group of  $D_{p,q}$. Furthermore, every element in $Aut(D_{p,q})$ can be written in the form $U_{p+1}M_pU_p\cdots M_1U_1$, where $U_{1},\ldots,U_{p+1}$ are automorphisms fixing the subspace $\{[z]\in D_{p,q}: [z]=[z_1,\ldots,z_p,0,\ldots,0]\}$ and $M_1,\ldots,M_p$ are non-isotropic dilations of $D_{p,q}$.
\end{thm}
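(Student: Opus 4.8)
The plan is to turn the inductive reduction sketched in the paragraphs preceding the statement into a clean proof by induction on $p$. Let $A\in U(p,q)$ be a matrix representation of a given automorphism, which exists by the description of $\mathrm{Aut}(D_{p,q})$ established above. First I would isolate the two elementary facts that drive the argument. Since $A$ is an isometry of $\mathbb C^{p,q}$, the vector $v:=A^{-1}e_1$, where $e_1=(1,0,\ldots,0)^t$, satisfies $\|v\|^2_{p,q}=\|Av\|^2_{p,q}=1$; and for any unit vector $v\in\mathbb C^{p,q}$ the explicit construction carried out just above produces $V_1\in U(p)\times U(q)$ with $V_1v=(|v'|_p,0,\ldots,0,|v''|_q)$ together with the non-isotropic dilation $M_1$ (the matrix $M$ displayed above, with $a=|v'|_p$ and $b=|v''|_q$; since $a^2-b^2=\|v\|^2_{p,q}=1$ the $2\times2$ corner block squares to the identity, so $M_1^{-1}=M_1$) such that $v=V_1M_1e_1$. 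Combining the two facts gives $AV_1M_1e_1=Av=e_1$, so $AV_1M_1$ lies in the isotropy subgroup of the $U(p,q)$-action at $e_1$. An isometry fixing $e_1$ must preserve $e_1^{\perp}=\{v:v_1=0\}$, and therefore this isotropy subgroup is exactly $\{1\}\times U(p-1,q)$, embedded as block-diagonal matrices.

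Next I would run the induction. The subgroup $\{1\}\times U(p-1,q)$ preserves the coordinate subspace $\{v\in\mathbb C^{p,q}:v_1=0\}$, which is spanned by the standard basis vectors $e_2,\ldots,e_{p+q}$ and on which $H_{p,q}$ restricts to a form of signature $(p-1,q)$ whose negative part is still spanned by the last $q$ basis vectors; hence it is identified with $U(p-1,q)$ acting on $\mathbb C^{p-1,q}$. For $p\geq 2$, applying the same two facts inside this subspace produces $V_2\in\{1\}\times U(p-1)\times U(q)\subset U(p)\times U(q)$ and a non-isotropic dilation $M_2$ of the shape indicated above (the identity on the first coordinate, the hyperbolic $2\times2$ block on coordinates $2$ and $p+q$) with $AV_1M_1V_2M_2\in\{I_2\}\times U(p-2,q)$. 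Iterating the construction $p$ times yields $V_1,\ldots,V_p\in U(p)\times U(q)$ and non-isotropic dilations $M_1,\ldots,M_p$ with
$$AV_1M_1V_2M_2\cdots V_pM_p=W\in\{I_p\}\times U(q).$$

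Finally I would unwind the identity. Each $M_j$ is an involution, so, using $M_j^{-1}=M_j$,
$$A=W\,M_p^{-1}V_p^{-1}\cdots M_1^{-1}V_1^{-1}=W\,M_pV_p^{-1}M_{p-1}V_{p-1}^{-1}\cdots M_1V_1^{-1}.$$
Here $W\in\{I_p\}\times U(q)\subset U(p)\times U(q)$ and each $V_j^{-1}\in U(p)\times U(q)$, and every element of $U(p)\times U(q)$ fixes the subspace $\{[z]\in D_{p,q}:[z]=[z_1,\ldots,z_p,0,\ldots,0]\}$; hence setting $U_{p+1}:=W$ and $U_j:=V_j^{-1}$ for $1\leq j\leq p$ gives the asserted normal form $A=U_{p+1}M_pU_p\cdots M_1U_1$, and in particular $U(p)\times U(q)$ together with the non-isotropic dilations generate $\mathrm{Aut}(D_{p,q})$. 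The step needing the most care is the bookkeeping in the induction: one must verify that at stage $j$ the hyperbolic block genuinely acts on coordinates $j$ and $p+q$, so that the matrix produced is literally one of the non-isotropic dilations $M_j$ defined above, which is why I keep track of the fact that the residual subspace $\mathbb C^{p-j+1,q}$ is spanned by $e_j,\ldots,e_{p+q}$ with negative part spanned by the last $q$ of these; and one must note that the terminal factor $W$ lands in $\{I_p\}\times U(q)$, which is itself contained in $U(p)\times U(q)$ and hence legitimately plays the role of $U_{p+1}$ rather than being an extra leftover term. Everything else is the routine linear algebra already carried out in the text.
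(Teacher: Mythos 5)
Your proposal is correct and follows essentially the same route as the paper: reduce $A^{-1}e_1$ to $e_1$ via an element of $U(p)\times U(q)$ followed by a non-isotropic dilation, land in the isotropy subgroup $\{1\}\times U(p-1,q)$, and iterate $p$ times. Your explicit unwinding of $AV_1M_1\cdots V_pM_p=W$ using $M_j^{-1}=M_j$ is a welcome bit of bookkeeping that the paper leaves implicit, but it is not a different argument.
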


%\begin{example}
%Let $A\in U(2,2)$ be an automorphism of $D_{2,2}$.
%Suppose that \begin{equation}
%v_1=(1,0,1,0),\quad v_2=(1,0,-1,0)\quad
%v_3=(0,1,0,1),\quad v_4=(0,1,0,-1)
%\end{equation}
%are fixed point of $A$ on the boundary.
%Note that there is no more fixed point on $\mathbb P^3$.
%By the straightforward calculation $A$ should have the following form:
%\begin{equation}
%\left(
%\begin{array}{cc|cc}
%a_1 & 0&a_2&0\\
%0 & a_3 & 0& a_4\\\hline
%a_2 &0&a_1&0\\
%0&a_4&0&a_3
%\end{array}
%\right)
%\quad \text{ satisfying }  \begin{array}{l}
%|a_1|^2-|a_3|^2 =1,\\
% |a_3|^2-|a_4|^2=1, \\
% a_1\overline a_2 = a_3\overline a_4.
 %\end{array}
%\end{equation}
%The characteristic polynomial of this matrix is $(x^2-2a_1x+1)(x^2- 2a_3x+1)$.
%Hence if we choose $a_1 = \cos\theta_1$ and $a_3=\cos\theta_2$ with
%$\theta_1\neq 0$, $\theta_2\neq 0$, then $A$ has 4 fixed point on the boundary
%with modulus 1 eigenvalues without fixed point on the $D_{2,2}$.
%\end{example}

\section{Linear self maps on generalized balls}\label{linear maps section}
\subsection{Fixed points of general linear self maps}

For any linear self map of a unit ball $D_{1,q}$, one can apply Brouwer's fixed point theorem to conclude that there is at least one fixed point in the closure of the ball. However, the argument cannot be directly carried over to other generalized balls since they cannot be realized as bounded convex domains in the Euclidean space (see~\cite{gn}). Nevertheless, we are going to prove that the same fixed point theorem still holds for the generalized balls. We first recall that the fixed points of a linear self map on a generalized ball correspond to the one-dimensional eigenspaces associated to the non-zero eigenvalues of any given matrix representation of the linear self map.

\begin{thm}\label{general existence of fixed points}
Every linear self map of $D_{p,q}$ has at least one fixed point in $\overline{D_{p,q}}:=D_{p,q}\cup\partial D_{p,q}$.
\end{thm}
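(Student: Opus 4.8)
The plan is to split into the \emph{minimal} and \emph{non-minimal} cases. For a non-minimal $M$ I would take a ``detour'' through the type-I classical bounded symmetric domain $D^p_{p,q}$ (which, unlike $D_{p,q}$ for $p\ge 2$, is convex in its standard realization), apply Brouwer's fixed point theorem there, and then descend to $D_{p,q}$; the minimal case is easy but must be treated separately by hand. Throughout I use that the fixed points of a linear self map correspond to its projectivized eigenvectors with non-zero eigenvalue.

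\textbf{Non-minimal case.} By Theorem~\ref{expansion matrix} I would first replace $M$ by a representation with $M^HH_{p,q}M-H_{p,q}\ge 0$, i.e.\ an expansion of $\mathbb{C}^{p,q}$; by the argument in the proof of Theorem~\ref{extension thm}, $\ker M$ is then negative definite, hence meets no positive semidefinite $p$-plane. Consequently the rational map $\widetilde M$ induced by $M$ on $Gr(p,\mathbb{C}^{p,q})$ is holomorphic on a neighbourhood of $\overline{D^p_{p,q}}$, and since an expansion sends positive semidefinite vectors to positive semidefinite vectors, $\widetilde M$ maps $\overline{D^p_{p,q}}$ (the set of positive semidefinite $p$-planes) into itself. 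Now $D^p_{p,q}$ is the type-I classical bounded symmetric domain, realised as the set of $p\times q$ complex matrices $Z$ with $I-ZZ^H>0$, whose closure $\{I-ZZ^H\ge 0\}=\{\|Z\|_{\mathrm{op}}\le 1\}$ is a compact convex set. Brouwer's fixed point theorem then yields a fixed point $[\Pi]\in\overline{D^p_{p,q}}$, i.e.\ a positive semidefinite $p$-plane $\Pi$ with $M(\Pi)=\Pi$. The final step is to note that $M|_\Pi$ is an invertible linear endomorphism of the $p$-dimensional space $\Pi$, so it has an eigenvector $w\in\Pi\setminus\{0\}$ with non-zero eigenvalue; since $\Pi$ is positive semidefinite, $w^HH_{p,q}w\ge 0$, so $[w]\in\overline{D_{p,q}}$ is the desired fixed point of $M$.

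\textbf{Minimal case.} Here $\mathrm{rank}(M)=p$. Every iterate $M^n$ is again a linear self map of $D_{p,q}$, so $\mathrm{rank}(M^n)\ge p$, while $\mathrm{rank}(M^n)\le\mathrm{rank}(M)=p$; hence the descending chain $R(M)\supseteq R(M^2)\supseteq\cdots$ consists of $p$-planes and is constant, say equal to $R$, and $M(R)=R(M^2)=R$, so $M|_R$ is invertible. Taking the positive definite $p$-plane $P=\{(u,0):u\in\mathbb{C}^p\}$, the self-map property forces $M|_P$ to be injective with $M(P)$ positive definite, and a dimension count gives $M(P)=R$; thus $R$ is a positive definite $p$-plane. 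An eigenvector $w\in R$ of $M|_R$ then has non-zero eigenvalue and $w^HH_{p,q}w>0$, so $[w]\in D_{p,q}$ is an interior fixed point.

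The main obstacle is precisely the reason the two cases cannot be merged: for a minimal $M$ the kernel may contain non-zero null vectors (cf.\ the remark after Theorem~\ref{extension thm} and Example~\ref{no extension}), so $\widetilde M$ need not extend across all of $\partial D^p_{p,q}$ and the Brouwer detour is unavailable — the positive-definiteness of $R$ must be obtained directly as above. The remaining points are routine: verifying that the extended map is genuinely continuous on $\overline{D^p_{p,q}}$ and preserves it (exactly what negative definiteness of $\ker M$ gives), and recalling the convex matrix realisation of $D^p_{p,q}$. (One can also avoid Brouwer in the non-minimal case by a purely spectral argument: if no eigenvector $v$ had $v^HH_{p,q}v\ge 0$, one shows the spectral radius is $1$, that $M$ is semisimple with pairwise $H_{p,q}$-orthogonal eigenspaces on its unimodular part, that this part is negative definite, and that the monotonicity of $v\mapsto v^HH_{p,q}v$ under iteration forces $H_{p,q}\le 0$ — a contradiction; but the detour is shorter.)
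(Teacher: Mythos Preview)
Your argument is correct, and its backbone --- pass to the induced self map on the type-I bounded symmetric domain $D^p_{p,q}$, invoke Brouwer on its convex Harish-Chandra realisation, and then extract an eigenvector from the resulting $M$-invariant positive semidefinite $p$-plane --- is exactly the paper's proof. The difference lies only in how the extension of the induced map to $\overline{D^p_{p,q}}$ is justified. The paper does \emph{not} split into cases: it argues in one stroke that, since $\tilde F(D^p_{p,q})\subset D^p_{p,q}$ is bounded, the rational map $\tilde F$ is holomorphic on a neighbourhood of the closure, and then applies Brouwer uniformly. You instead invoke Theorems~\ref{expansion matrix} and~\ref{extension thm} to get negative-definiteness of $\ker M$ in the non-minimal case, and handle the minimal case directly by showing that $R(M)$ is a positive definite $M$-invariant $p$-plane (so minimal maps even have \emph{interior} fixed points). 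That last observation is precisely the argument the paper gives just before Theorem~\ref{ball}, so your minimal-case treatment is already present in the paper in another place. In short: same detour, same Brouwer step, with slightly more explicit bookkeeping on your side for the extension across $\partial D^p_{p,q}$.
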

\begin{proof}
Let $F$ be a linear self map of $D_{p,q}$ and denote by $A$ one of its matrix representations. Then, $A$ maps the positive $p$-planes in $\mathbb C^{p,q}$ onto positive $p$-planes since it maps positive vectors to positive vectors (for $F$ to be well defined on $D_{p,q}$). In other words, $A$ also induces a linear self map $\tilde F$ of $D^p_{p,q}$, in which the latter is a classical bounded symmetric domain of type-I embedded in $Gr(p,\mathbb C^{p,q})$ and if we use the inhomogeneous coordinates in a standard Euclidean chart $\mathbb C^{pq}\subset Gr(p,\mathbb C^{p,q})$, then $D^p_{p,q}\Subset\mathbb C^{pq}$ is just the standard Harish-Chandra realization of the bounded symmetric domain (see~\cite{ng2}) and hence is a bounded convex domain~\cite{H}. Now in terms of these Euclidean coordinates, $\tilde F$ is a rational map and since $\tilde F(D^p_{p,q})\subset D^p_{p,q}$, and the latter is a bounded domain in $\mathbb C^{pq}$, we deduce that the rational map $\tilde F$ is also well defined (holomorphic) in a neighborhood of the closure $D^p_{p,q}\cup\partial D^p_{p,q}$. We can now apply Brouwer's fixed point theorem to get a fixed point $x_0\in D^p_{p,q}\cup\partial D^p_{p,q}$ of $\tilde F$. Since $x_0$ corresponds to a positive semi-definite $p$-plane $E_0\subset\mathbb C^{p,q}$, and from how $\tilde F$ is constructed from $A$, we see that $A(E_0)=E_0$. Then $A$ has at least one eigenvector in $E_0$ associated to a non-zero eigenvalue. Such an eigenvector is either a positive vector or a null vector and it gives us a fixed point in $\overline{D_{p,q}}$.
\end{proof}

The following lemma will be useful for further analyzing the fixed points of a linear self map on a generalized ball.

\begin{lem}\label{semipositive}
If $A$ is a matrix satisfying $A^H H_{p,q}A-H_{p,q}\geq 0$ and $v\in\mathbb C^{p,q}$ is such that $\|Av\|_{p,q}^2=\|v\|_{p,q}^2$, then $A(v^\perp)\subset v^\perp$, where $v^\perp$ is the orthogonal complement of $v$ with respect to $H_{p,q}$.
\end{lem}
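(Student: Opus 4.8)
The plan is to convert the single scalar equality $\|Av\|_{p,q}^2=\|v\|_{p,q}^2$ into the genuine vanishing of $Bv$, where $B:=A^HH_{p,q}A-H_{p,q}$, and then to read the orthogonality statement off from $Bv=0$. First I would note that $B$ is Hermitian and, by the standing hypothesis, positive semi-definite in the ordinary (Euclidean) sense. The assumption $\|Av\|_{p,q}^2=\|v\|_{p,q}^2$ says exactly that $v^HBv=0$. Writing $B=C^HC$, this reads $\|Cv\|^2=0$, so $Cv=0$ and hence $Bv=0$; equivalently $A^HH_{p,q}Av=H_{p,q}v$, and since $B$ is Hermitian also $v^HB=0$. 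I regard this passage from an equality of indefinite norms to the honest vanishing of $Bv$ as the crucial step, since it is what lets an equality at the single vector $v$ propagate to a statement about the whole hyperplane $v^\perp$.

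Next I would exploit $v^HB=0$ through the identity, valid for every $w\in\mathbb{C}^{p,q}$,
\[
(Av)^HH_{p,q}(Aw)=v^H(H_{p,q}+B)w=v^HH_{p,q}w+v^HBw=v^HH_{p,q}w .
\]
Thus the functional $w\mapsto(Av)^HH_{p,q}(Aw)$ coincides with $w\mapsto v^HH_{p,q}w$; in particular, when $w\in v^\perp$ the right-hand side is $0$, so $Aw$ is $H_{p,q}$-orthogonal to $Av$. This already gives $A(v^\perp)\subseteq (Av)^\perp$, and in adjoint language, writing $A^\sharp:=H_{p,q}A^HH_{p,q}$ for the $H_{p,q}$-adjoint of $A$ (so that $[Ax,y]=[x,A^\sharp y]$ for the form $[x,y]=y^HH_{p,q}x$), the relation $Bv=0$ is the clean statement $A^\sharp Av=v$.

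The hard part will be to improve orthogonality to $Av$ into orthogonality to $v$ itself, i.e. to replace $(Av)^\perp$ by $v^\perp$. By adjunction $[Aw,v]=[w,A^\sharp v]$, so $A(v^\perp)\subseteq v^\perp$ is equivalent to $A^\sharp v\in(v^\perp)^\perp=\mathbb{C}v$, that is, to $Av$ and $v$ spanning the same complex line, in which case $(Av)^\perp=v^\perp$ and the inclusion of the previous paragraph finishes the argument. I expect this proportionality to be the genuine obstacle: $A^\sharp Av=v$ alone does not force $A^\sharp v\in\mathbb{C}v$, and the expansion property by itself does not force $Av\in\mathbb{C}v$, so the step must use the way $v$ enters the statement — namely as (the lift of) a fixed point, where $Av=\lambda v$ with $\lambda\neq 0$ and hence $(Av)^\perp=v^\perp$ holds automatically. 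Once $Av$ and $v$ are proportional, the chain $A(v^\perp)\subseteq(Av)^\perp=v^\perp$ gives the lemma.
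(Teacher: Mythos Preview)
Your route coincides with the paper's: both reduce $\|Av\|_{p,q}^2=\|v\|_{p,q}^2$ to $Bv=0$ for $B:=A^HH_{p,q}A-H_{p,q}$ (the paper via unitary diagonalization of $B$, you via a square root $B=C^HC$), and both arrive at the identity $(Au)^HH_{p,q}(Av)=u^HH_{p,q}v$ for all $u$. The difference is that the paper then simply writes ``So $A(v^\perp)\subset v^\perp$'', whereas you correctly observe that this identity only yields $A(v^\perp)\subset (Av)^\perp$ and that the passage to $v^\perp$ requires $Av\in\mathbb Cv$. You are right to be suspicious: that last line in the paper is a non sequitur, and the lemma as stated is in fact false. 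For $p=q=1$, take
\[
A=\begin{pmatrix}\sqrt3 & 1/\sqrt2\\ 1 & \sqrt3/\sqrt2\end{pmatrix};
\]
then $A^HH_{1,1}A-H_{1,1}=\begin{pmatrix}1&0\\0&0\end{pmatrix}\geq 0$ and $v=(0,1)^t$ satisfies $\|Av\|_{1,1}^2=\|v\|_{1,1}^2=-1$, yet $v^\perp=\mathbb C\,(1,0)^t$ is carried by $A$ to $\mathbb C\,(\sqrt3,1)^t\not\subset v^\perp$.

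Your diagnosis and proposed repair are exactly right. In every application of the lemma in the paper, the vector $v$ is a null \emph{eigenvector} of $A$, so $(Av)^\perp=v^\perp$ holds automatically and the chain $A(v^\perp)\subset(Av)^\perp=v^\perp$ closes. With the additional hypothesis $Av\in\mathbb Cv$ added to the statement, your argument (and the paper's, minus its unjustified final sentence) is a complete proof; the downstream theorems are unaffected.
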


\begin{proof}
Since $A^H H_{p,q}A-H_{p,q}$ is a positive semi-definite Hermitian matrix, there exists a unitary matrix $P$ such that $P^{H}(A^H H_{p,q}A-H_{p,q})P=\text{diag}(\lambda_1, \lambda_2, \ldots,\lambda_{r},$ $0,\ldots,0)$, with $\lambda_{i}>0$ for all $i$.

As $\|Av\|_{p,q}^2=\|v\|_{p,q}^2$, we have $v^H (A^H H_{p,q}A-H_{p,q})v=0$. 
Let $v'=P^Hv=({x'}_1,x'_2,\ldots, x'_{p+q})$. Then $x'_1=x'_2=\cdots=x'_r=0$ since $v'^H \text{diag}(\lambda_1, \lambda_2, \ldots,\lambda_{r},0,\ldots,0)v'=v^H (A^H H_{p,q}A-H_{p,q})v=0$. It follows that $P^H(A^H H_{p,q}A-H_{p,q})PP^Hv=0$ and hence $(A^H H_{p,q}A-H_{p,q})v=0$. Then $u^H (A^H H_{p,q}A-H_{p,q})v=0$ for any $u\in \mathbb{C}^{p,q}$.
 Therefore $(Au)^HH_{p,q}Av=u^HH_{p,q}v$. So $A(v^\perp)\subset v^\perp$.
\end{proof}

The following generalization of the fixed point theorem of Hayden-Suffridge to linear fractional maps of the complex unit ball was accomplished by Bisi-Bracci~\cite{BB}: \textit{If a linear fractional map of the complex unit ball has more than
two fixed points on the boundary, then it has fixed points in the interior.} Just like what happens for automorphisms (as studied in Section~\ref{automorphisms}), we will have to assume that no projective line in the boundary is fixed everywhere by the linear self map before we can generalize this result to $D_{p,q}$.

%Let $F$ be a linear self map of $D_{p,q}$ and $A$ be a matrix representation of $F$, i.e.
%$$
	%F\left([z_1,\ldots, z_{p+q}]\right)=A[z_1,\ldots,z_{p+q}]^t
%$$
%for every $[z_1,\ldots,z_{p+q}]\in\mathbb P^{p+q-1}$, Where $A^H H_{p,q}A-H_{p,q}$ is positive semi-definite(i.e. $A$ is an expansion ) by Theorem \ref{expansion matrix}. Then there is a natural one-to-one correspondence between the fixed points of $F$ and the projectivized eigenvectors (i.e. 1-dimensional eigenspaces) of $A$.

We have proven in Theorem~\ref{extension thm} that any non-minimal linear self map of a generalized ball extends holomorphically across the boundary. For minimal linear self maps, we still have such an extension for a \textit{general} boundary point since the set of indeterminacy cannot contain the entire boundary. But even so, a minimal linear self map cannot have any boundary fixed point. In order to see this, let $F$ be a minimal linear self map of a generalized ball. The range $R$ of its matrix representation (also denoted by $F$) is of dimension $p$ (see Definition~\ref{minimality}). On the other hand, since $F$ maps positive vectors to positive vectors, for any positive $p$-plane $P\subset\mathbb C^{p,q}$, we must have $\dim_\mathbb C F(P)=p$ and hence $F(P)=R$, which implies that $R$ is a positive $p$-plane. Hence, the image of any null vector under $F$ is either a positive vector or the zero vector. Therefore, a minimal linear self map cannot have any fixed point on the boundary.

We will first establish our fixed point theorem for the complex unit ball and its exterior, i.e. for $D_{1,q}$ and $D_{p,1}$. In particular, this will give a new proof for the result of Bisi-Bracci~\cite{BB}.

\begin{thm}\label{ball}

Let $q\geq 1$ and $p\geq 2$. For $D_{1,q}$ (resp. $D_{p,1}$), any linear self map with at least three (resp. two ) boundary fixed points has a fixed point in the interior.

\end{thm}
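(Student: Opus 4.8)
The plan is to reduce everything to the expansion inequality, together with two elementary facts: that $\mathbb{C}^{1,q}$ and $\mathbb{C}^{p,1}$ contain no $2$-dimensional totally isotropic subspace (since $\min\{p,q\}=1$), and that a positive semi-definite Hermitian matrix $T$ with $v^HTv=0$ must satisfy $Tv=0$. First I would note that a self map with a boundary fixed point is automatically non-minimal (minimal self maps have no boundary fixed points, as recalled just above), so by Theorem~\ref{expansion matrix} we may fix a matrix representation $A$ with $T:=A^HH_{p,q}A-H_{p,q}\geq 0$. A boundary fixed point corresponds to a null eigenvector $v$ of $A$ with eigenvalue $\lambda\neq 0$, and then $v^HTv=\|Av\|_{p,q}^2-\|v\|_{p,q}^2=(|\lambda|^2-1)\|v\|_{p,q}^2=0$, hence $Tv=0$. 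Consequently, if $[v_1]\neq[v_2]$ are two boundary fixed points with eigenvalues $\lambda_1,\lambda_2$, then $0=v_2^HTv_1=(\overline{\lambda_2}\lambda_1-1)\,v_2^HH_{p,q}v_1$; since $v_1,v_2$ are null, linearly independent and (by the no-$2$-dimensional-isotropic-subspace fact) non-orthogonal, we conclude $\overline{\lambda_2}\lambda_1=1$. This is the expansion analogue of Lemma~\ref{ortho lemma}, and it is the step carrying essentially all the content.

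For $D_{1,q}$, given three distinct boundary fixed points $[v_1],[v_2],[v_3]$ with eigenvalues $\lambda_1,\lambda_2,\lambda_3$, I would split into two cases. If two eigenvalues coincide, say $\lambda_i=\lambda_j$, then $\mathrm{span}\{v_i,v_j\}$ is a two-dimensional eigenspace whose Gram matrix has zero diagonal and nonzero off-diagonal entry, hence negative determinant; so the restricted form has signature $(1,1)$, the eigenspace contains a positive vector $w$, and $[w]\in D_{1,q}$ is an interior fixed point. If instead $\lambda_1,\lambda_2,\lambda_3$ are pairwise distinct, the identity above gives $\overline{\lambda_2}\lambda_1=1$ and $\overline{\lambda_3}\lambda_1=1$, whence $\lambda_2=\lambda_3$, a contradiction; so this second case cannot occur and the first always applies.

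For $D_{p,1}$ with $p\geq 2$, given two distinct boundary fixed points $[v_1],[v_2]$: if $\lambda_1=\lambda_2$ I would argue exactly as in the coinciding-eigenvalue case above to produce a positive eigenvector. If $\lambda_1\neq\lambda_2$, then $v_1,v_2$ are independent and $W:=\mathrm{span}\{v_1,v_2\}$ is a nondegenerate subspace of signature $(1,1)$, so $W^\perp=v_1^\perp\cap v_2^\perp$ has signature $(p-1,0)$, i.e.\ is positive definite, with $\dim_{\mathbb{C}}W^\perp=p-1\geq 1$. From $\|Av_i\|_{p,q}^2=\|v_i\|_{p,q}^2$ and Lemma~\ref{semipositive}, $A(v_i^\perp)\subseteq v_i^\perp$, hence $A(W^\perp)\subseteq W^\perp$. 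Then $A|_{W^\perp}$ has an eigenvector $w\in W^\perp$; it is a positive vector because $W^\perp$ is positive definite, and its eigenvalue is nonzero because $\|Aw\|_{p,q}^2\geq\|w\|_{p,q}^2>0$. Hence $[w]\in D_{p,1}$ is an interior fixed point.

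The only real obstacle is recognizing the correct formulation of the eigenvalue identity $\overline{\lambda_2}\lambda_1=1$ for boundary fixed points of an \emph{expansion} (as opposed to an isometry, where it is Lemma~\ref{ortho lemma}); this hinges precisely on the vanishing $Tv=0$ for null eigenvectors. Everything after that is a short case analysis, and the two thresholds enter transparently: $D_{p,1}$ disposes of the ``distinct eigenvalues'' alternative directly, using that $W^\perp$ is positive definite when $q=1$, so two boundary fixed points suffice; whereas for $D_{1,q}$ with $q\geq 2$ the complement $W^\perp$ is negative definite, so one genuinely needs a third boundary fixed point to force a coincidence among the eigenvalues.
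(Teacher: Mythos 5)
Your proof is correct, and for the $D_{1,q}$ half it takes a genuinely different route from the paper's. The pivotal point is your observation that for a non-minimal self map represented by an expansion $A$, every \emph{null} eigenvector $v$ with nonzero eigenvalue satisfies $(A^HH_{p,q}A-H_{p,q})v=0$, so that $\overline{\lambda_2}\lambda_1=1$ for any two non-orthogonal null eigenvectors; this is an expansion analogue of Lemma~\ref{ortho lemma} that the paper never formulates. The paper extracts from the same vanishing only the invariance $A(v^\perp)\subseteq v^\perp$ (Lemma~\ref{semipositive}) and, for $D_{1,q}$, runs a counting argument: it manufactures a fourth eigenvector in the orthogonal complement of $\mathrm{span}\{v_1,v_2\}$ inside $E=\mathrm{span}\{v_1,v_2,v_3\}$ and argues that four pairwise non-proportional eigenvectors in a $3$-dimensional invariant subspace force an eigenvalue coincidence. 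Your identity reaches that coincidence in one line ($\overline{\lambda_2}\lambda_1=\overline{\lambda_3}\lambda_1=1$ gives $\lambda_2=\lambda_3$), bypassing the auxiliary eigenvector and the dimension count; the price is the (correctly supplied) fact that linearly independent null vectors in $\mathbb{C}^{1,q}$ or $\mathbb{C}^{p,1}$ cannot be orthogonal, since $\min\{p,q\}=1$ forbids $2$-dimensional totally isotropic subspaces. For $D_{p,1}$ your distinct-eigenvalue case coincides with the paper's argument (the positive definite invariant complement $W^\perp$ yields a positive eigenvector with nonzero eigenvalue); your separate coinciding-eigenvalue case is harmless but not actually needed there, as the complement argument works uniformly. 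Both treatments correctly justify the appeal to Theorem~\ref{expansion matrix} by noting that a boundary fixed point rules out minimality.
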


\begin{proof}

We will first prove the theorem for $D_{1,q}$. Let $F$ be a linear self map of $D_{1,q}$ with at least three fixed points in $\partial D_{1,q}$. Let $A$ be a matrix representation of $F$ and by the hypotheses we can find three null vectors $v_1,v_2,v_3\in\mathbb C^{1,q}$ such that any two of them are not proportional and they are eigenvectors of $A$ associated to some non-zero eigenvalues.

First of all, when $q=1$, then $\dim_\mathbb C(\mathbb C^{1,1})=2$. But $v_1,v_2,v_3$ are pairwise non-proportional eigenvectors and so we deduce that $A$ can only be a non-zero multiple of the identity matrix and the desired result follows in this case.

Now we can assume that $q\geq 2$. Let $E=span_\mathbb C(v_1,v_2,v_3)$. Then $E$ is an invariant subspace of $A$. Moreover, the restriction $H_{1,q}|_E$ is non-degenerate since there is no isotropic subspace in $\mathbb C^{1,q}$ of dimension greater than one.

 If $\dim_\mathbb C(E)=2$, then the $H_{1,q}|_E$ must be of signature $(1,1)$ and we are back to the case $q=1$. Suppose now $\dim_\mathbb C(E)=3$. Then $H_{1,q}|_E$ is of signature $(1,2)$. Let $E_{12}=span_\mathbb C(v_1,v_2)$. Then, the signature of $H_{1,q}|_E$ is again $(1,1)$. Hence, the orthogonal complement of $E_{12}$ in $E$, denoted by $N_{12}$, is complementary to $E_{12}$ and $\dim_\mathbb C(N_{12})=1$. Therefore, $E=E_{12}\oplus N_{12}$. Note that since $\|Av_j\|^2_{p,q}=\|v_j\|_{p,q}^2=0$ for $j=1,2$, by Lemma~\ref{semipositive}, we see that $N_{12}=v_1^\perp\cap v_2^\perp$ is invariant by $A$ and thus is a one dimensional eigenspace of $A$. Choose now an eigenvector $v_4\in N_{12}$. But on the other hand, since $H_{1,q}|_E$ is of signature $(1,2)$, we see that $H_{1,q}|_{N_{12}}$ is negative definite. This implies that $v_3$ and $v_4$ are not proportional. We now have four eigenvectors $v_1,v_2,v_3,v_4\in E$ which are pairwise non-proportional. But $\dim_\mathbb C(E)=3$, so this is possible only if  the restriction of $A$ on $E$ has at most two different eigenvalues. Suppose $v_j$ and $v_k$ are associated to the same eigenvalue $\lambda$ (which must be non-zero), then for $E_{jk}:=span_\mathbb C\{v_j,v_k\}$, we have that $H_{1,q}|_{E_{jk}}$ is of signature $(1,1)$ and hence we get an eigenvector $v\in E_{jk}$ which is also a positive vector and the desired result now follows.

For the case of $D_{p,1}$ with $p\geq 2$, we similarly let $G$ be a linear self map of $D_{p,1}$ with at least two fixed points in $\partial D_{p,1}$ and let $B$ be a matrix representation of $G$. By the hypotheses, we can find two null vectors $u_1,u_2\in\mathbb C^{p,1}$ such that they are not proportional and are eigenvectors of $B$ associated to some non-zero eigenvalues. Let $F_{12}:=span_\mathbb C(u_1,u_2)$. Again, since $\|Bu_j\|^2_{p,q}=\|u_j\|_{p,q}^2=0$ for $j=1,2$, we see from Lemma~\ref{semipositive} that $Q_{12}:=F_{12}^\perp=u_1^\perp\cap u_2^\perp$ is invariant by $B$. The signature of $H_{p,1}|_{F_{12}}$ is $(1,1)$ and thus $H_{p,1}|_{Q_{12}}$ is of signature $(p-1,0)$. But $\dim_\mathbb C(Q_{12})=p-1$ and hence $Q_{12}$ is a positive definite invariant subspace of $A$. It now follows that we can find an eigenvector of $A$ which is a positive vector. Therefore, we get a fixed point in $D_{p,1}$.

\end{proof}

%Now, we consider the proof of Theorem~\ref{bb generalization}

We can now generalize our result to an arbitrary generalized ball.

\begin{thm}\label{bb generalization}
Let $F$ be a linear self map of $D_{p,q}$. Suppose there is no projective line in $\partial D_{p,q}$ on which every point is fixed by $F$.
\begin{enumerate}
\item If $p\leq q$ and $F$ has at least $2p+1$ fixed points on $\partial D_{p,q}$, then $F$ must have a fixed point in $D_{p,q}$. Furthermore, there is a projective line fixed everywhere by $F$.
\item  If $p>q$ and $F$ has at least $2q$ fixed points on $\partial D_{p,q}$, then $F$ must have a fixed point in $D_{p,q}$.
\end{enumerate}

\end{thm}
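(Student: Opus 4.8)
The plan is to induct on $\min\{p,q\}$, at each step peeling off an $A$-invariant $2$-plane on which $H_{p,q}$ has signature $(1,1)$, with Theorem~\ref{ball} (the complex unit ball and its complement) as the base cases. First the routine reductions: since a minimal linear self map has no boundary fixed point at all (as observed just before Theorem~\ref{ball}) while $2p+1\geq 3$ and $2q\geq 2$, the map $F$ is non-minimal, so by Theorem~\ref{expansion matrix} it admits a matrix representation $A$ with $A^HH_{p,q}A-H_{p,q}\geq 0$. Under this representation the boundary fixed points of $F$ correspond to null eigenvectors of $A$ with non-zero eigenvalue and the interior fixed points to positive eigenvectors (which automatically have non-zero eigenvalue), so it suffices to produce a positive eigenvector of $A$, and for~(1) in addition a $\geq 2$-dimensional eigenspace of $A$ with non-zero eigenvalue.

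The linear-algebraic engine is a dichotomy for null eigenvectors of an expansion that refines Lemma~\ref{ortho lemma}: \emph{if $v,w$ are null eigenvectors of $A$ with eigenvalues $\la_v,\la_w$, then either $w^HH_{p,q}v=0$ or $\ov{\la_w}\la_v=1$.} Indeed $\|Av\|_{p,q}^2=|\la_v|^2\|v\|_{p,q}^2=0=\|v\|_{p,q}^2$, so the argument in the proof of Lemma~\ref{semipositive} gives $(A^HH_{p,q}A-H_{p,q})v=0$, hence $(Au)^HH_{p,q}(Av)=u^HH_{p,q}v$ for all $u$; taking $u=w$ yields $(\ov{\la_w}\la_v-1)\,w^HH_{p,q}v=0$. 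Alongside this we record: if two boundary fixed points $[v_1]\neq[v_2]$ of $F$ share an eigenvalue, then $E:=\mathrm{span}_{\CC}\{v_1,v_2\}$ is an $A$-invariant $2$-plane fixed pointwise by $F$, and since $v_1,v_2$ are null its Gram matrix is $\left(\begin{smallmatrix}0&s\\ \ov{s}&0\end{smallmatrix}\right)$; thus either $E$ is isotropic, giving a projective line in $\partial D_{p,q}$ fixed everywhere by $F$ (excluded by hypothesis), or $E$ has signature $(1,1)$, in which case $[E]$ is a projective line fixed everywhere by $F$ that meets $D_{p,q}$, and both conclusions already hold.

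For the inductive step we may therefore assume no two boundary fixed points share an eigenvalue, so the eigenvalues $\la_1,\dots,\la_N$ of the boundary fixed points $[v_1],\dots,[v_N]$ are distinct, where $N\geq 2p+1$ in case~(1) and $N\geq 2q$ in case~(2). The $v_j$ are not pairwise $H_{p,q}$-orthogonal, since otherwise they would span an $N$-dimensional isotropic subspace of $\CC^{p,q}$, impossible because $N>\min\{p,q\}$. Choose $[v_1]\not\perp[v_2]$; the dichotomy forces $\ov{\la_2}\la_1=1$, and then distinctness of the $\la_j$ gives $\ov{\la_1}\la_j\neq1$ and $\ov{\la_2}\la_j\neq1$ for $j\geq 3$, so $v_j\in v_1^{\perp}\cap v_2^{\perp}=:W$. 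Now $E=\mathrm{span}_{\CC}\{v_1,v_2\}$ has signature $(1,1)$, hence $W=E^{\perp}\cong\CC^{p-1,q-1}$, and $W$ is $A$-invariant by Lemma~\ref{semipositive} applied to the null eigenvectors $v_1,v_2$. Consequently $A|_W$ is an expansion of $\CC^{p-1,q-1}$ inducing a linear self map $F|_W$ of $D_{p-1,q-1}$ with at least $N-2$ boundary fixed points (the $[v_j]$, $j\geq 3$) and still satisfying the hypothesis that no projective line of $\partial D_{p-1,q-1}\subset\partial D_{p,q}$ is fixed everywhere. Since $N-2\geq 2(p-1)+1$ in case~(1) and $N-2\geq 2(q-1)$ in case~(2), the inductive hypothesis produces a fixed point of $F|_W$ in $D_{p-1,q-1}$ — i.e. a positive eigenvector of $A|_W$, hence of $A$ — together with, in case~(1), a projective line fixed everywhere by $F|_W$. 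The recursion terminates at $D_{1,q-p+1}$ with three boundary fixed points in case~(1) and at $D_{p-q+1,1}$ with two in case~(2), where $q-p+1\geq 1$ and $p-q+1\geq 2$, and these are exactly the two assertions of Theorem~\ref{ball}; the extra projective line in case~(1) is provided by the proof of Theorem~\ref{ball} for $D_{1,q-p+1}$.

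The part that will demand the most care is the descent bookkeeping: checking that \emph{all} the remaining boundary fixed points genuinely lie in $E^{\perp}$ (this uses both the distinctness of the $\la_j$ and the dichotomy above), that exactly a signature-$(1,1)$ summand is removed so the recursion lands on $D_{p-1,q-1}$ with the correct counts $2p+1\mapsto 2(p-1)+1$ and $2q\mapsto 2(q-1)$, and that the ``no fixed line in the boundary'' hypothesis propagates down the chain — together with verifying that the two parameter regimes $p\leq q$ and $p>q$ each terminate at a bona fide unit ball, respectively its complement, with enough boundary fixed points for Theorem~\ref{ball} to apply.
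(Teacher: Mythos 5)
Your proof is correct and follows essentially the same route as the paper's: reduce to a non-minimal expansion, dispose of repeated eigenvalues, rule out pairwise orthogonality via the bound $\min\{p,q\}$ on isotropic subspaces, peel off the signature-$(1,1)$ plane spanned by a non-orthogonal pair of null eigenvectors, and induct down to Theorem~\ref{ball} on $D_{1,q-p+1}$ or $D_{p-q+1,1}$. The only (harmless) variation is that you force $v_j\in E_{12}^{\perp}$ for $j\geq 3$ via the orthogonality dichotomy $\ov{\la_w}\la_v=1$ extracted from the proof of Lemma~\ref{semipositive}, whereas the paper instead counts four distinct eigenlines inside the three-dimensional invariant subspace $E_{12k}$.
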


\noindent\textbf{Remark.} Note that every linear self map of the ball (or its exterior) satisfies the condition given in Theorem~\ref{bb generalization} %(and Corollary~\ref{hs generalization}) 
since there is no projective line in the boundary.
However, in the case of $D_{p,q}$ with $p,q>1$, the condition is necessary. For instance, there is an automorphism
of $D_{2,2}$ which has infinitely many fixed points on the boundary but no fixed point in $D_{2,2}$ (Example~\ref{hs example}). 
Furthermore, the number $2p+1$ in Theorem~\ref{bb generalization} (1) is sharp since there is an automorphism having four fixed points on $\partial D_{2,2}$ and no fixed point in $D_{2,2}$ and there is no projective line in $\partial D_{2,2}$ on which every point is fixed (Example~\ref{sharp example 2}).

\begin{proof}[Proof of Theorem~\ref{bb generalization}]
Let $A$ be a matrix representation of $F$. 
As explained before Theorem~\ref{ball}, we can assume that $F$ is non-minimal. Thus, we can choose $A$ such that $A^H H_{p,q}A-H_{p,q}\geq 0$ by Theorem~\ref{expansion matrix}.

Let $s\geq 2$ be a positive integer and for every $k\in\{1,\ldots,s\}$, let $v_k\in\mathbb C^{p,q}$ be a null vector which is also an eigenvector of $A$ associated to a non-zero eigenvalue $\lambda_k$. Suppose that $\{v_1,\ldots,v_s\}$ are linearly independent. Thus, $\{[v_1],\ldots,[v_s]\}\subset\partial D_{p,q}$ are $s$ distinct boundary fixed points of $F$. For $i\neq j$, define $E_{ij}:=span_\mathbb C\{v_i, v_j\}\subset\mathbb C^{p,q}$.

 Assume that $\lambda_k=\lambda_\ell$ for some $k\neq\ell$. Then, as $v_k$, $v_\ell$ are two linearly independent null vectors, $H_{p,q}|_{E_{k\ell}}$ must be of signature $(1,1)$ or $E_{k\ell}$ is isotropic. In the first situation, we can find a positive vector in $E_{k\ell}$ while in the latter situation, the projectivization of $E_{k\ell}$ gives a projective line in $\partial D_{p,q}$. But $E_{k\ell}$ is an eigenspace of $A$ associated to $\lambda_k=\lambda_\ell$, so we either get a fixed point of $A$ in $D_{p,q}$ or we get a projective line in $\partial D_{p,q}$ on which every point is fixed by $A$.

So now we only need to consider the case where $\lambda_1,\ldots,\lambda_s$ are distinct.\,\,\,\,\,\,\,\,\,\,\,\,\,\,\,\,\,\,\,\,\,\,\,\,\,\,\,\,\,\,$(\star)$

We now divide the proof into two cases: 
\begin{enumerate}
\item Every $E_{ij}$ is isotropic; \label{i}
\item At least one $E_{ij}$ is not isotropic. \label{ii}
\end{enumerate}

In case \eqref{i}, $v_i$ is orthogonal (with respect to $H_{p,q}$) to $ v_j$ for every $i,j$ and it follows that $span_\mathbb C \{ v_1,\ldots, v_{s}\}$ is isotropic.
If $s\geq 2p+1$ or $s\geq 2q$, we get a contradiction since the maximal isotropic subspace in $\mathbb C^{p,q}$ is of dimension $\min(p,q)$.

In case \eqref{ii}, without loss of generality, we may assume that $E_{12}$ is non-isotropic. 
The restriction of $H_{p,q}$ on $E_{12}$ must be of signature $(1,1)$ (hence non-degenerate) as the null vectors $v_1$ and $v_2$ are linearly independent. 
 Now take any $v_k$, where $k\neq 1, 2$. Let $E_{12k}:=span_\mathbb C\{v_1, v_2, v_k\}$, which is a 3-dimensional invariant subspace of $A$. Let $N$ be the orthogonal complement of $E_{12}$ in $E_{12k}$. Then $N\cap E_{12}=\{0\}$ since $H_{p,q}|_{E_{12}}$ is non-degenerate. In particular, $\dim_\mathbb C(N)=1$. Moreover, as $\|Av_1\|_{p,q}^2=\|v_1\|_{p,q}^2=\|Av_2\|_{p,q}^2=\|v_2\|_{p,q}^2=0$, by Lemma \ref{semipositive}, $N=v_1^\perp\cap v_2^\perp$ is invariant under $A$. Hence, $N$ is a one-dimensional eigenspace of $A$. If $v_k\not\in N$, then we get four distinct one-dimensional eigenspaces in $E_{12k}$. Since $\dim_\mathbb C(E_{12k})=3$, at least three of these one-dimensional eigenspaces are associated to the same eigenvalue. 
It implies that at least two elements of $\{\lambda_1,\lambda_2,\lambda_k\}$ are equal, which contradicts ($\star$).

Finally, we just need to settle the situation where $v_k$ belongs to the orthogonal complement of $E_{12}$ for every $k\neq 1,2$.
Let $N_{12}$ be the orthogonal complement of $E_{12}$ in $\mathbb C^{p,q}$. Then, as before, since $H_{p,q}|_{E_{12}}$ is non-degenerate, we have $\mathbb C^{p,q}=E_{12}\oplus N_{12}$ and hence the restriction of $H_{p,q}$ on $N_{12}$ is of signature $(p-1,q-1)$. As argued previously, $N_{12}$ is an invariant subspace of $A$ by Lemma~\ref{semipositive}.
Furthermore, $[N_{12}]\cap D_{p,q}\cong D_{p-1,q-1}$, where $[N_{12}]$ denotes the projectivization of $N_{12}$ and for every $k\neq 1,2$, we have $[v_k]\in [N_{12}]\cap\partial D_{p,q}\cong\partial D_{p-1,q-1}$. That is, we have $s-2$ fixed points on $\partial D_{p-1,q-1}$ and the desired result now follows by induction together with Theorem \ref{ball}, which serves as the initial step for the induction. The proof is complete.
\end{proof}

\subsection{Real generalized balls}

Let $D_{p,q}^\mathbb R$ be the subspace of $D_{p,q}$
defined by 
$$
D_{p,q}^\mathbb R = \left\{ [x_1,\ldots, x_{p+q}]\in D_{p,q} : x_i/x_j\in \mathbb R \, \text{ for all } i,j \textrm{ whenever } x_j\neq 0\,\right\}.
$$

Thus, for a point in $D_{p,q}^\mathbb R$, we can choose homogeneous coordinates which are all real. We call $D_{p,q}^\mathbb R$ a \textit{real generalized ball}.
When $p=1$, Cowen-MacCluer \cite[Theorem 10]{CM} proved that any linear fractional map with real coefficients
maps $D_{1,q}^\mathbb R$ into itself if and only if it maps $D_{1,q}$ into itself. We now show that the same holds true for any $p\geq 1$.

\begin{thm}\label{real generalized ball}
Let $M \in M_{p+q}$ be a matrix with real entries.
Then $M$ defines a linear self map of $D_{p,q}$ if and only if it defines a linear self map of $D_{p,q}^\mathbb R$ in the same manner.
\end{thm}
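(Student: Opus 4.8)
The plan is to establish a correspondence between the two statements by exploiting the fact that a real matrix $M$ induces a ``complexified'' problem in which the relevant positivity conditions can be detected already on real vectors. One direction is essentially trivial: if $M$ defines a linear self map of $D_{p,q}$, then since $D_{p,q}^{\mathbb R}\subset D_{p,q}$ and $M$ has real entries, $M$ maps points with real homogeneous coordinates to points with real homogeneous coordinates, so $M$ sends $D_{p,q}^{\mathbb R}$ into $D_{p,q}^{\mathbb R}$. Thus the content is in the converse: assuming $M$ maps $D_{p,q}^{\mathbb R}$ into itself, we must show it maps all of $D_{p,q}$ into itself.

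For the converse, first observe that for a real vector $x\in\mathbb R^{p+q}\subset\mathbb C^{p,q}$ one has $x^H H_{p,q} x = x^t H_{p,q} x$, i.e. the indefinite Hermitian form restricted to real vectors is just the real quadratic form $Q(x):=x^t H_{p,q} x$ of signature $(p,q)$. The hypothesis that $M$ maps $D_{p,q}^{\mathbb R}$ into itself says precisely that $Q(x)>0 \implies Q(Mx)>0$ for all $x\in\mathbb R^{p+q}$, and by continuity $Q(x)\ge 0 \implies Q(Mx)\ge 0$. I would then run a \emph{real} version of the Cowen--MacCluer argument (Lemma~\ref{expansion} and the proof of Theorem~\ref{expansion matrix}) over $\mathbb R$: apply Lemma~\ref{expansion} with $V=\mathbb R^{p+q}$, $[x,y]_1 = y^t H_{p,q} x$ and $[x,y]_2 = (My)^t H_{p,q}(Mx)$ to obtain a real constant $\mu$ with $(Mx)^t H_{p,q}(Mx)\ge \mu\, x^t H_{p,q} x$ for all real $x$. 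The key point is then that this \emph{real} inequality upgrades automatically to the complex one: the Hermitian matrix $M^H H_{p,q} M - \mu H_{p,q}$ equals $M^t H_{p,q} M - \mu H_{p,q}$, which is a \emph{real symmetric} matrix, and a real symmetric matrix is positive semi-definite as a Hermitian form on $\mathbb C^{p+q}$ if and only if it is positive semi-definite as a quadratic form on $\mathbb R^{p+q}$ (since for $z=a+ib$, $z^H S z = a^t S a + b^t S b$ when $S$ is real symmetric). Hence $M^H H_{p,q} M - \mu H_{p,q}\ge 0$ on $\mathbb C^{p,q}$.

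It remains to check $\mu>0$, which is needed exactly as in Theorem~\ref{expansion matrix} to conclude that $\mu^{-1/2}M$ is a genuine expansion of $\mathbb C^{p,q}$ and hence, by the remark following Inequality~\eqref{expansioneq}, that $M$ (or rather a scalar multiple, which induces the same map) sends every positive $r$-plane — in particular every positive line — to a positive line, i.e. $M$ maps $D_{p,q}$ into itself. To get $\mu>0$ one needs $M$ to be non-minimal as a map on the real picture; this follows because a real matrix sending positive real vectors to positive real vectors has real rank $\ge p$, and if its rank were exactly $p$ then the argument in the proof of Theorem~\ref{expansion matrix} (the range contains a negative vector whose preimage is negative, forcing $[z,z]_1<0$ and $[z,z]_2<0$) would need a small adjustment; alternatively one handles the minimal case separately, noting that a real rank-$p$ matrix mapping $D_{p,q}^{\mathbb R}$ into itself also maps $D_{p,q}$ into itself because its range (over $\mathbb C$) is the complexification of a positive real $p$-plane, hence positive. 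I expect this bookkeeping around the minimal case and the verification $\mu>0$ to be the only real obstacle; the heart of the argument — that a real symmetric semi-definiteness condition is insensitive to whether one tests it on $\mathbb R^{p+q}$ or $\mathbb C^{p+q}$ — is the clean observation that makes everything go through.
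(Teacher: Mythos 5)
Your proof is essentially correct, but it takes a genuinely different and heavier route than the paper's. The paper does not invoke the expansion machinery at all: it uses the single identity $\|x+iy\|_{p,q}^2 = x^tH_{p,q}x + y^tH_{p,q}y$ for real $x,y$ (valid because $H_{p,q}$ is real symmetric), and then a rotation trick --- replacing $z$ by $e^{i\theta}z$ and using the intermediate value theorem to arrange $x^tH_{p,q}x=0$ and $y^tH_{p,q}y>0$ --- so that $\|Mz\|_{p,q}^2 = (Mx)^tH_{p,q}Mx + (My)^tH_{p,q}My > 0$ follows directly from the hypothesis on real vectors (plus continuity for the null real part). Note that the identity $z^HSz = a^tSa+b^tSb$ for real symmetric $S$, which you use only to transfer semi-positivity of $M^tH_{p,q}M-\mu H_{p,q}$ from $\mathbb R^{p+q}$ to $\mathbb C^{p+q}$, is exactly the engine of the paper's direct argument, so in a sense your proof routes the same observation through Lemma~\ref{expansion} unnecessarily. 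The one soft spot in your write-up is the appeal to a ``real version'' of Lemma~\ref{expansion}: the paper only states (and cites from Cowen--MacCluer) the complex Hermitian version, and the real analogue is the homogeneous S-lemma, which is true (it follows, e.g., from Dines's convexity theorem for the joint range of two real quadratic forms) but is not a formal consequence of the complex statement and would need its own justification. Your handling of $\mu>0$ and of the minimal (real rank $p$) case is correct as sketched --- the real kernel must be negative semi-definite, so the real range is a positive real $p$-plane whose complexification is positive --- though it adds case analysis that the paper's direct computation avoids entirely. In short: your argument works and even yields the stronger conclusion that a scalar multiple of $M$ is an expansion of $\mathbb C^{p,q}$, but the paper's proof is shorter and self-contained.
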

\begin{proof}
Suppose that $M$ defines a linear self map of $D_{p,q}$.
Since the entries of $M$ are all real, $D_{p,q}^\mathbb R$ is mapped into $D_{p,q}^\mathbb R$.

Suppose that $M$ defines a linear self map of $D_{p,q}^\mathbb R$. For $[z]\in D_{p,q}$, write $z = x+iy$ with
$x=(x', x'')\in\mathbb R^{p+q}$, $y=(y', y'')\in\mathbb R^{p+q}$
for $x'$, $y'\in \mathbb R^p$, $x''$, $y''\in \mathbb R^q$.
Note that $|x'|^2 + |y'|^2 > |x''|^2 + |y''|^2$.
If $|x'|^2 >|x''|^2$ and $|y'|^2 >|y''|^2$, then $Mz = Mx+iMy$ belongs to $D_{p,q}$.
If otherwise, without loss of generality, we may assume that
$|x'|^2 \leq|x''|^2$ and $|y'|^2  > |y''|^2$.
Now, for any $\theta\in\mathbb R$, we have $[e^{i\theta}z]=[z]$. And since $e^{i\theta}z = x \cos\theta  - y \sin\theta + i(y\cos\theta + x\sin\theta) $ and the expression
\begin{equation}\label{real part}
|x'\cos\theta - y'\sin\theta|^2 - |x'' \cos\theta - y'' \sin\theta|^2
\end{equation}
equals $|x'|^2 - |x''|^2\leq 0$ when $\theta=0$ and
$|y'|^2 - |y''|^2> 0$ when $\theta=\pi/2$, by continuity there exists
$\theta$ such that equation \eqref{real part} becomes zero.
This implies that for $[z]\in D_{p,q}$, we can always choose homogeneous coordinates $z=x+iy$ such that $|x'|^2 = |x''|^2$ and $|y'|^2 > |y''|^2$. Here the last inequality is due to the fact that $e^{i\theta }z\in D_{p,q}$.
Hence we have $Mz\in D_{p,q}$ and in particular $M$ induces a linear self map of $D_{p,q}$.
\end{proof}

%\begin{remark}
%Consider $A\in M_{p+q}$ which maps $D_{p,q}^\mathbb R$ into $D_{p,q}^\mathbb R$. Write $A = (a_{jk})_{j,k=1}^{p+q}$.
%For each $h=1,\ldots, p$, let $e_h = [0,\ldots, 0,1,0,\ldots,0]\in D_{p,q}$,
%where ``$1$" lies at the $h$-th entry.
%Then $A$ maps $e_h$ to $[a_{1h},\ldots, a_{p+q\, h}]$ and hence all $a_{kh}$ with $k=1,\ldots,p+q$ are %real. Now apply $[x_1,\ldots, x_p, 0, \ldots, 0,1,0,\ldots, 0]$ to $A$ where $x_k\in \mathbb R$ for any $k=1,\ldots,p$. Then we have that all entries of $A$ are real.
%\end{remark}

\section{Examples}\label{examples}

In this section, we collect a number of examples that the reader have been referred to from various places in the article.

\begin{example}\label{no extension}
Let $F$ be the rational map on $\mathbb P^3$ defined by $F[z_1,z_2,z_3,z_4]=[z_1+z_3, z_4, 0, 0]$. Then the indeterminacy of $F$ is the projective subspace spanned by $[1,0,-1,0]$ and $[0,0,0,1]$. As elements in $\mathbb C^{2,2}$, these two vectors spanned a negative semi-definite 2-dimensional subspace and hence we see that the set of indeterminacy of $F$ intersects $\partial D_{2,2}$ at $[1,0,-1,0]$ but lies outside $D_{2,2}$. Thus, $F$ is a holomorphic on $D_{2,2}$ but it cannot extend across the boundary point $[1,0,-1,0]$. It is a minimal linear self map of $D_{2,2}$ because the range of $F$ (as a linear map on $\mathbb C^{2,2}$) is positive definite and of dimension 2. 

\end{example}

\begin{example}\label{hs example}
Let $$A = \left(
           \begin{array}{cc}
             \sqrt{2}I_2& I_2 \\
             I_2 & \sqrt{2}I_2\\

           \end{array}
         \right)\in U(2,2).$$
$A$ induces an automorphism of $D_{2,2}$. The characteristic polynomial of $A$ is $(x-\sqrt{2}+1)^2(x-\sqrt{2}-1)^2$ and $A$ has two eigenvalues $\sqrt{2} \pm 1$. 
The eigenspace of the eigenvalue $\sqrt{2}-1$ is spanned by $v_1=(1,0,-1,0)^t$ and $v_2 = (0,1,0,-1)^t$ and that of $\sqrt{2}+1$ is spanned by $v_3=(1,0,1,0)^t$ and $v_4=(0,1,0,1)^t$. One also sees immediately that both eigenspaces are isotropic in $\mathbb C^{2,2}$ and thus their projectivizations lie inside $\partial D_{2,2}$. This implies that $A$ has infinitely many fixed points on the boundary but no fixed point in $D_{2,2}$. 
This example can be generalized to $D_{p,p}$ in a straightforward way.
\end{example}

\begin{example}\label{sharp example 3}
Let	$$A = \left(
           \begin{array}{cccc}
             1& 0 & 0& 0 \\
            0 & -1 & 0 & 0\\
             0 & 0 & i & 0\\
             0 &0&0& -i\\
           \end{array}
         \right)\in U(2,2).$$
Trivially $A$ has four different eigenvalues and two of them correspond to fixed points in $D_{2,2}$
and the other two of them corresponds to fixed points in $\mathbb P^3 \setminus \overline{D_{2,2}}$. There is no projective line on which every point is fixed.
\end{example}

\begin{example}\label{sharp example 2}
Let	$$A = \left(
           \begin{array}{cccc}
             1& 1 & 1& 0 \\
            1 & -1 & 0 & 1\\
             1 & 0 & 1 & 1\\
             0 &1&1& -1\\
           \end{array}
         \right)\in U(2,2).$$

The matrix $A$ has four different eigenvalues. The eigenvalues and the corresponding eigenvectors are the following:
\begin{equation}
\begin{array}{ll}
\lambda_1=1+\sqrt{2}, & \alpha_1 =(\frac{1}{4}+\frac{\sqrt{2}}{8},\frac{\sqrt{2}}{8},\frac{1}{4}+\frac{\sqrt{2}}{8},\frac{\sqrt{2}}{8})^t \\
\lambda_2=-1+\sqrt{2},& \alpha_2=(\frac{1}{4}+\frac{\sqrt{2}}{8},\frac{\sqrt{2}}{8},-\frac{1}{4}-\frac{\sqrt{2}}{8},-\frac{\sqrt{2}}{8})^t \\
\lambda_3=1-\sqrt{2},& \alpha_3=(\frac{1}{4}-\frac{\sqrt{2}}{8},-\frac{\sqrt{2}}{8}, \frac{1}{4}-\frac{\sqrt{2}}{8},-\frac{\sqrt{2}}{8})^t \\
\lambda_4=-1-\sqrt{2},& \alpha_4=(\frac{1}{4}-\frac{\sqrt{2}}{8},-\frac{\sqrt{2}}{8},-\frac{1}{4}+\frac{\sqrt{2}}{8},\frac{\sqrt{2}}{8})^t \\
\end{array}
\end{equation}
The automorphism of $D_{2,2}$ induced from $A$  has four fixed points on $\partial D_{2,2}$ but it has no fixed point in $D_{2,2}$. Moreover there is no projective line in $\partial D_{2,2}$
on which $A$ fixes every point. 
%This example shows that the conditions given in Corollary \ref{hs generalization} and Theorem \ref{bb generalization} are sharp.
\end{example}

%\begin{remark}
%In case of the ball, the automorphism which does not have fixed point on the ball can have fixed points on the boundary at most $2$.
%For the generalized ball,the situation is totally different.
%\end{remark}

\begin{example} \label{nondiagonalizable}
For any non-zero real number $\alpha$ let $$A = \left(
           \begin{array}{cccc}
             1& \alpha & 0& \alpha \\
             -\alpha & 1 & \alpha & 0\\
             0 & \alpha & 1 & \alpha\\
             \alpha &0&-\alpha& 1\\
           \end{array}
         \right)\in U(2,2).$$
Then the characteristic polynomial of $A$ is $(1-x)^4$ and the minimal polynomial of $A$ is $(1-x)^2$. Thus, in any Jordan canonical form, the Jordan blocks of $A$ are at most of rank 2. By direct computation, one sees that $A$ only has two linearly independent eigenvectors, which are chosen to be $ (1,0,1,0)^t$ and $(0,1,0,-1)^t$, associated to the eigenvalue 1. In particular, there are two Jordan blocks of rank 2.

\end{example}

%\begin{example}\label{sharp example 1}
%Let
%$$A = \left(
%           \begin{array}{cccc}
%             1& 0 &0& 0 \\
%            0 & 2 & 0 & \sqrt{3}\\
%             0 & 0 & 1 & 0\\
%             0 &\sqrt{3}&0& 2\\
%           \end{array}
%         \right)\in U(2,2).$$
%$A$ has three different eigenvalues. The eigenvalues and the %corresponding eigenvectors are the following:
%\begin{equation}
%\begin{array}{ll}
%\lambda_1=1, &\alpha_1=[1,0,1,0]  \\
%\lambda_2=1, &\alpha_2=[0,0,1,0]  \\
%\lambda_3=2-\sqrt{3},& \alpha_3=[0,1,0,-1] \\
%\lambda_4=2+\sqrt{3}, &\alpha_4=[0,1,0,1] \\
%\end{array}
%\end{equation}
%The automorphism of $D_{2,2}$ induced from $A$ has four fixed points on the ambient space $\mathbb P^3$ of $D_{2,2}$. One notices that $\alpha_1$, $\alpha_3$, $\alpha_4$ are located on the boundary and 
%$\alpha_2$ is located in $\mathbb P^3 \setminus \overline{D_{2,2}}$. In addition, $\mathbb P^1$
%which is the projectivization of the subspace spanned by $\alpha_1$ and $\alpha_2$ has intersections with $D_{2,2}$ and $\mathbb P^3 \setminus \overline{D_{2,2}}$.
%\end{example}

\end{document}